\def\lra{\longrightarrow}
\def\ind{\operatorname{ind}}
\def\Td{\operatorname{Td}}
\def\tr{\operatorname{tr}}
\def\End{\operatorname{End}}
\def\coker{\operatorname{coker}}
\def\H{{\mathcal H}}
\newtheorem{theorem}{Theorem}[section]
\newtheorem{corollary}[theorem]{Corollary}
\newtheorem{lemma}[theorem]{Lemma}
\newtheorem{proposition}[theorem]{Proposition}
\theoremstyle{definition}
\newtheorem{definition}[theorem]{Definition}
\numberwithin{equation}{section}
\title{Uniformization and an Index Theorem for  Elliptic Operators 
Associated with Diffeomorphisms of a Manifold}
\author{Anton Savin, Elmar Schrohe,  Boris Sternin}
\begin{document}

\maketitle

\tableofcontents

\section*{Introduction}
\addcontentsline{toc}{section}{Introduction}

Let  $M$ be a closed smooth 
Riemannian manifold and   $g:M\to M$ a smooth isometric diffeomorphism. 
The powers of $g$ generate an action of the group   $\mathbb{Z}$
of integers  on $M$.
In this article we consider a class of {\em differential operators   with shifts},
i.e.\ operators  of the form
\begin{equation}\label{eq-intr1}
D=\sum D_k T^k:C^\infty(M)\lra C^\infty(M),  
\end{equation}
where the sum is finite, the $D_k$  are differential operators, and 
$T^ku(x)=u(g^k(x))$.

We shall introduce an ellipticity condition which implies the Fredholm 
property for operators of this type and compute their index.
The method, which we use to solve the problem, is called pseudodifferential (actually even differential) uniformization.  
The idea is to study instead of $D$ an elliptic differential operator
with the same index on a suitably chosen manifold.
This procedure consists of two steps.

In the first step we replace the original manifold  $M$ by $M\times\mathbb{R}$ 
endowed with the diagonal action of   $\mathbb{Z}$, which 
is free and proper. 
On $M\times\mathbb{R}$ we define an elliptic operator 
as the external product of $D$ with a special operator $A$ of index one
on the real line.

In the second step we interpret this operator on $M\times\mathbb{R}$  
as a differential operator in sections of a Hilbert space bundle with fiber 
$l^2(\mathbb Z)$  over the quotient  
$(M\times\mathbb{R})/\mathbb{Z}$, which is a
smooth manifold, since the action is free and proper
(cf.\ \cite{BaCo2,HiKo1, Luk1} for ideas similar in spirit).  
The point of this construction is that the index is preserved while, 
for the resulting operator, 
which we call the {\em differential uniformization} of $D$, 
the action of $g$ simply is a shift in the fiber, which is much easier to treat.
We compute the symbol of the resulting operator, 
establish the Fredholm property 
(finiteness theorem), and express the index in terms of the symbol of $D$ and 
topological invariants of the manifold. 
 
Note that the index problem in a related situation (even for the case, 
when instead of $\mathbb{Z}$ we take an arbitrary discrete group of polynomial growth) was studied in   \cite{NaSaSt17}.  
The solution used there did not use the idea of uniformization and 
was rather complicated,
while the method presented here is quite simple and natural.

\section{Statement of the Main Results}

\paragraph{1. Symbol and ellipticity.} 

Let $M$ be a smooth Riemannian manifold and $g:M\to M$ a metric preserving 
diffeomorphism.   
We consider an operator of the form 
\begin{equation}\label{eq-op1}
D=\sum D_k T^k:C^\infty(M)\lra C^\infty(M)
\end{equation}
with differential operators $D_k$ and the shift operator $T$ defined by 
$Tu(x)=u(g(x))$. 
While  {\em a priori}  the $D_k$ can have any order, our 
construction will simplify, if the order is equal to one. 
Hence, from now on we will {\em assume that the $D_k$ are first order 
differential operators.}
The modifications for higher order operators are left to the reader. 

The definition, below, is in the spirit of Antonevich and Lebedev, 
cf.\  \cite{AnLe1}, Sections 40-42 and Remark 37.12.

\begin{definition}\label{def-1} 
The {\em symbol} $\sigma(D)$ of $D$ 
is the function on $T^*M\setminus 0$ taking values in operators on 
$l^2(\mathbb Z)$, defined by   
\begin{eqnarray}\label{elliptic}
(\sigma(D)(x,\xi))w(n)=
\sum_k \sigma(D_k)(\partial g^{n}(x,\xi))\mathcal{T}^kw(n), 
\quad w\in l^2(\mathbb Z), n\in \mathbb Z,
\end{eqnarray}
where $\mathcal T$ is the shift operator on $l^2(\mathbb Z)$
given by $(\mathcal{T}w)(n)=w(n+1)$, 
$\sigma(D_k)$ is the principal symbol of $D_k$, and
$\partial g=({}^t dg)^{-1}:T^*M\to T^*M$ is the codifferential of $g$.

We call $D$ {\em elliptic}, if the operator in \eqref{elliptic} is invertible for
$(x,\xi)\in T^*M\setminus0$. 
\end{definition}
The aim of the paper is to reduce this operator to a classical elliptic differential 
operator on a closed manifold via a uniformization procedure and to 
compute its index.

\paragraph{2. Reduction to proper action: passage to the infinite cylinder.}
 
On the real line with coordinate $t$, consider the operator 
$$
 A=\frac\partial{\partial  t}+t:
 \H^s(\mathbb{R})\to \H^{s-1}(\mathbb{R})
$$
(see the Appendix for the definition of $\H^s(\mathbb R)$ and 
more details on operators of this type). 
It is elliptic and has index one: The 
cokernel is trivial, while the kernel is generated by the function
$e^{-t^2/2}$.  

We next extend the isometry $g:M\to M$ to an isometry 
$\widetilde g:M\times \mathbb{R}\to M\times \mathbb{R}$ 
of the infinite cylinder $M\times \mathbb{R}$ with local coordinates $x,t$ by
letting 
$$
\widetilde g(x,t)=(g(x), t+1).
$$
As before, we obtain an action of $\mathbb Z$ and a shift operator $\widetilde T$
by
$$
(\widetilde{T}u)(x,t)=u(g(x),t+1).
$$
With the operators $D_k$ from Equation \eqref{eq-op1} 
we then define the operator
\begin{eqnarray*}\label{tildeD}
 \widetilde{D}=\sum_k D_k \widetilde{T}^k:\H^s(M\times \mathbb{R})
 \lra \H^{s-1}(M\times \mathbb{R})
\end{eqnarray*}
and consider the   {\em external product} of the operators 
$\widetilde{D}$ and $A$, 
\begin{equation}\label{eq-product1}
\widetilde{D}\# A=\left(
                   \begin{array}{cc}
                        \widetilde{D} &  A\\
                       -  A^* &   \widetilde{D}^*
                   \end{array} 
                \right):\H^s(M\times\mathbb{R},\mathbb{C}^2)\lra \H^{s-1}(M\times\mathbb{R},\mathbb{C}^2).
\end{equation}
Here we take the adjoint with respect to the  inner product in $L^2(M\times \mathbb{R})$.

\paragraph{3. Reduction to  the (smooth) orbit space.}
The isometry  $\widetilde{g}$ defines a free proper action of the group  
$\mathbb{Z}$ on the cylinder. 
Hence, the corresponding orbit space is a smooth manifold. 
Let us consider functions on the cylinder  as functions on the orbit space 
ranging in functions on the fiber of the projection\footnote{The space $M_\mathbb{Z}$ is also called the {\em homotopy quotient} of $M$ by the action of  $\mathbb{Z}$.}
$$
M\times \mathbb{R}\lra  (M\times\mathbb{R})/\mathbb{Z}=M_\mathbb{Z}.
$$
The point of this representation is  that 
the shift operator $\widetilde{T}$  on $M\times \mathbb{R}$
becomes an operator of multiplication, i.e.,  a local operator, on the space   
$M_\mathbb{Z}$.

For  $L^2$-spaces this representation of functions on $M\times\mathbb{R}$ in terms of functions on $M_\mathbb{Z}$ is described by the following 
proposition.
\begin{proposition}
Let $\mathcal{E}$ be the vector bundle 
$$
\mathcal{E}=(M\times \mathbb{R}\times l^2(\mathbb{Z}))/ \mathbb{Z}
$$
over $M_\mathbb{Z}$ with fiber $l^2(\mathbb{Z})$.
Here $n\in\mathbb{Z}$ acts by
$$
(x,t,u)\longmapsto (g^n(x),t+n,\mathcal{T}^{-n}u).
$$
We then have the isomorphism 
\begin{equation}\label{eq-isoma1}
 \begin{array}{ccc}
   I:L^2(M\times \mathbb{R}) & \lra & L^2(M_\mathbb{Z},\mathcal{E}),\vspace{1mm}\\
(I \varphi)(x,t,n)  &=& \varphi(g^n(x),t+n).
 \end{array}
\end{equation}
\end{proposition}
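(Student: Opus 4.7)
The plan is to verify in three steps that $I$ is a well-defined isometric isomorphism.

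First, I would check that for each $\varphi\in L^2(M\times\mathbb{R})$ the formula $(I\varphi)(x,t,n) = \varphi(g^n(x), t+n)$ defines, pointwise in $(x,t)$, an $l^2(\mathbb Z)$-valued function that is equivariant with respect to the $\mathbb{Z}$-action on $M\times\mathbb{R}\times l^2(\mathbb Z)$ given in the statement. Concretely, one replaces the representative $(x,t)$ by $(g^m(x),t+m)$ and verifies that the resulting value in $l^2(\mathbb Z)$ differs from $(I\varphi)(x,t,\cdot)$ by exactly the prescribed fibre shift $\mathcal{T}^{\pm m}$; this is an immediate consequence of the identity $\varphi(g^{n+m}(x),t+n+m) = (I\varphi)(x,t,n+m)$. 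Thus $I\varphi$ descends to a well-defined section of $\mathcal E$.

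Second, I would establish that $I$ is an isometry. Since the $\mathbb{Z}$-action on $M\times\mathbb{R}$ is free and proper, $M\times[0,1)$ is a fundamental domain, and I can rewrite $\|I\varphi\|_{L^2(M_\mathbb Z,\mathcal E)}^2$ as $\int_M\int_0^1 \sum_{n\in\mathbb Z}|\varphi(g^n(x),t+n)|^2\,dt\,d\mathrm{vol}(x)$. Tonelli's theorem allows the exchange of the sum and integral; since $g$ is a Riemannian isometry of $M$, the change of variables $x\mapsto g^{-n}(x)$ and $t\mapsto t-n$ converts the $n$-th summand into $\int_M\int_n^{n+1}|\varphi(x,t)|^2\,dt\,d\mathrm{vol}(x)$. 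Summing over $n\in\mathbb Z$ reassembles $\|\varphi\|_{L^2(M\times\mathbb R)}^2$. This calculation simultaneously yields the isometry property, shows that $I\varphi\in L^2(M_\mathbb Z,\mathcal E)$, and guarantees by Tonelli that $(I\varphi)(x,t,\cdot)\in l^2(\mathbb Z)$ for almost every $(x,t)$.

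Third, I would exhibit an inverse. Every $s\in L^2(M_\mathbb Z,\mathcal E)$ lifts to an equivariant function $\widetilde s:M\times\mathbb R\to l^2(\mathbb Z)$; I set $(Js)(x,t):=\widetilde s(x,t)(0)$. The equivariance of $\widetilde s$ under the $\mathbb Z$-action translates directly into $\widetilde s(x,t)(n) = (Js)(g^n(x),t+n)$, so that $I\circ J=\mathrm{id}$; conversely, $J(I\varphi)(x,t)=\varphi(g^0(x),t+0)=\varphi(x,t)$ gives $J\circ I=\mathrm{id}$. Combined with step two, this establishes the isomorphism. There is no genuine obstacle here; the only technical point worth care is the Tonelli/Fubini exchange in step two, which simultaneously underwrites the pointwise a.e.\ finiteness of the $l^2$ norm and the measurability needed for $I\varphi$ to represent an honest $L^2$ section of $\mathcal E$.
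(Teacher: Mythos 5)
Your proposal is correct and follows essentially the same route as the paper, whose entire proof consists of exhibiting the explicit inverse $(I^{-1}u)(x,t)=u(g^{-[t]}(x),\{t\},[t])$ --- which is exactly your $Js$ evaluated via the equivariant lift. You additionally spell out the equivariance check and the fundamental-domain/Tonelli computation of the isometry, which the paper leaves as a ``direct computation''; this is a welcome amplification, not a different argument.
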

\begin{proof}
A direct computation shows that the inverse mapping is equal to 
$$
(I^{-1}u)(x,t)=u(g^{-[t]}(x),\{t\},[t]),
$$
where $t=[t]+\{t\}$ is the decomposition into integer and fractional part of  $t$. 
\end{proof}

\paragraph{4. Main theorem.}

\begin{theorem}\label{main}
\begin{enumerate}\renewcommand{\labelenumi}{{\rm (\alph{enumi})}}
\item  The operator $I$ in \eqref{eq-isoma1} induces an isomorphism of 
Sobolev spaces 
\begin{equation}\label{eq-isom3}
 I:\H^s(M\times \mathbb{R})\to H^s(M_\mathbb{Z},\mathcal{E}),
\end{equation}
where the Sobolev space $H^s(M_{\mathbb Z},{\mathcal E})$ 
of sections of the infinite-dimensional bundle  
$\mathcal{E}$ over $M_\mathbb{Z}$
carries the norm 
\begin{eqnarray}\label{norms1}
\|u\|^2=\sum_n\|(1+\Delta_{x,t} + n^2)^{s/2}u(x,t,n)\|^2_{L^2(M\times[0,1])}
\end{eqnarray}
 \item The operator $\mathcal{D}=I (\widetilde{D}\# A)I^{-1}$, which closes the commutative diagram
\begin{equation}\label{eq-diag1}
 \xymatrix{
   \H^s(M\times \mathbb{R}) \ar[r]^{\widetilde{D}\# A} \ar[d]_I &  \H^{s-1}(M\times \mathbb{R})   \ar[d]^I \\
  H^s(M_\mathbb{Z},\mathcal{E}) \ar[r]_{\mathcal{D}} & H^{s-1}(M_\mathbb{Z},\mathcal{E}),
 }
\end{equation}
is a differential operator.
Its symbol in the sense of {\rm \cite{NSScS99}}, Section 3.3,
is the (non-homogeneous) operator-valued function on 
$T^*M_{\mathbb Z}$
\begin{equation}\label{orb-symbol1}
\sigma(\mathcal{D})(x,\xi,t,\tau)=(\sigma(D)(x,\xi))\# (i\tau+t+n):l^2(\mathbb{Z},\mu_{\xi,\tau,s})\lra l^2(\mathbb{Z},\mu_{\xi,\tau,s-1}).
\end{equation}
Here,   $l^2(\mathbb Z, \mu_{\xi,\tau,s})$ carries the norm  
\begin{equation}\label{norms_l2} 
 \|w\|^2_{\xi,\tau,s}=\sum_n(1+\xi^2+\tau^2+n^2)^s |w(n)|^2.
\end{equation}
\item If $D$ is elliptic,  
then the differential operator $\mathcal{D}$ is also elliptic, i.e.\
the operator in  \eqref{orb-symbol1} is invertible for large $|(\xi,\tau)|$, say for 
$|(\xi,\tau)|\ge R$.

\item Under the assumptions of  (c) one has the equality: 
\begin{equation}\label{eq-eq1}
 \ind D=\ind \mathcal{D}.
\end{equation}
\end{enumerate}
\end{theorem}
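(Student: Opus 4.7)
My plan is as follows. For (a), I would unpack the Appendix definition of $\H^s(M\times\mathbb{R})$, a Sobolev space adapted to $A$ and built from a weight of the form $(1+\Delta_x-\partial_t^2+t^2)^{s/2}$. Splitting $\mathbb{R}=\bigsqcup_{n\in\mathbb{Z}}[n,n+1)$ and pulling back via the isometry $(x,t)\mapsto(g^n(x),t+n)$, the Laplace part of the weight is preserved (since $g$ is an isometry), while $t^2$ becomes $(t+n)^2$, which is equivalent to $n^2$ on the fundamental domain; summing over $n$ reproduces exactly \eqref{norms1}. For (b), I would conjugate through $I$ one operator at a time: a direct substitution gives $I\widetilde{T}^kI^{-1}=\mathcal{T}^k$; the chain rule for $g^n$ turns $D_k$ evaluated at $(g^n(x),t+n)$ into a differential operator in $x$ whose principal symbol is $\sigma(D_k)(\partial g^n(x,\xi))$, matching Definition \ref{def-1}; and $IAI^{-1}=\partial_t+t+N$, where $N$ acts on $l^2(\mathbb{Z})$ by multiplication by $n$. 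Assembling these into the block matrix for the external product then yields the operator-valued symbol \eqref{orb-symbol1}.

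For (c), I would form the fiber-wise positive block
$$
(\sigma(D)\#(i\tau+t+n))^*(\sigma(D)\#(i\tau+t+n))=\sigma(D)^*\sigma(D)+(\tau^2+(t+n)^2)+\text{off-diagonal},
$$
and observe that under ellipticity of $D$ one has $\sigma(D)^*\sigma(D)\ge c|\xi|^2$ by homogeneity, so the diagonal is bounded below by $c(|\xi|^2+\tau^2+n^2)$. This matches precisely the ratio of weights $\mu_{\xi,\tau,s}/\mu_{\xi,\tau,s-1}$, so for $|(\xi,\tau)|\ge R$ large enough the symbol is an isomorphism of the Sobolev fibers in \eqref{norms_l2}.

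For (d), since $I$ is an isomorphism by (a), one has $\ind\mathcal{D}=\ind(\widetilde{D}\#A)$, so the claim reduces to showing $\ind(\widetilde{D}\#A)=\ind D$. My plan is to deform $\widetilde{T}^k$ to the pure $x$-shift $T^k$ via a one-parameter family $\widetilde{D}_s=\sum D_k(T^k\otimes S^{sk})$ for $s\in[0,1]$, check that $\widetilde{D}_s\#A$ remains Fredholm throughout by verifying that its operator-valued symbol stays elliptic, and conclude that the index is constant along the homotopy. At $s=0$ the operator factors as the classical external product $D\otimes 1\,\#\,1\otimes A$ of $D$ on the compact manifold $M$ with $A$ on $\mathbb{R}$, whose index by the Künneth formula equals $\ind D\cdot\ind A=\ind D$.

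The chief technical obstacle, which I expect to be the heart of the proof, is precisely step (d): $\widetilde{D}$ and $A$ do not commute---one computes $[A,\widetilde{T}^k]=-k\widetilde{T}^k$, so the commutator is not a lower-order perturbation in the naive sense---and hence the classical Künneth argument does not apply directly. The main work will be to control these commutator contributions along the homotopy, establishing uniform ellipticity so that one has a genuine continuous family of Fredholm operators with constant index.
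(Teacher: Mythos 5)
Your overall route coincides with the paper's: (a) by comparing the weight $t^2$ with $n^2$ on fundamental domains, (b) by conjugating each factor through $I$, (c) by positivity of $\sigma^*\sigma$, and (d) by shrinking the $t$-component of the shift to zero and applying the Atiyah--Singer external-product computation at the commuting endpoint. Two points, however, deserve attention. First, a minor one: in (a) the operator $(1+\Delta_x-\partial_t^2+t^2)^{s/2}$ is nonlocal for general $s$, so one cannot literally restrict the norm to the intervals $[n,n+1)$ and replace $t^2$ by $n^2$ there; the paper circumvents this by composing $I$ with the fiberwise Fourier transform $K$ and checking that the generating operator $\Delta_x+t^2-\partial_t^2$ is carried to $\Delta_x-\partial_\varphi^2-\partial_t^2$ modulo lower order, so that the isomorphism holds for all $s$ at once. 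Your argument as stated is only immediate for $s=0$.

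The genuine gap is in (d), at the step ``conclude that the index is constant along the homotopy.'' The family $\varepsilon\mapsto B_\varepsilon=(\sum_k D_k\widetilde T_\varepsilon^k)\#A$ is \emph{not} continuous in operator norm (translation by $\varepsilon$ is only strongly continuous), so knowing that each $B_\varepsilon$ is elliptic and Fredholm does not by itself give local constancy of the index; the standard perturbation argument fails here. You correctly sense that something is hard, but you locate the difficulty in the commutator $[A,\widetilde T^k]$, which is only relevant to the endpoint factorization (and is harmless there, since at $\varepsilon=0$ the shift acts purely in $x$ and the factors do commute). The missing ingredient is a substitute for norm continuity: the paper invokes H\"ormander's Theorem 19.1.10, whose hypotheses require strong continuity of both $B_\varepsilon$ and a parametrix $B_\varepsilon^{-1}$ together with norm continuity and compactness of the remainders $1-B_\varepsilon B_\varepsilon^{-1}$ and $1-B_\varepsilon^{-1}B_\varepsilon$. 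Producing such a parametrix uniformly in $\varepsilon$ is itself nontrivial: one needs the inverse symbol $\sigma(B_\varepsilon)^{-1}=\sum_k a_kT^k$ to have coefficients decaying faster than any power of $k$, which the paper obtains from Schweitzer's spectral invariance theorem for the smooth crossed product. Without this mechanism (or an equivalent one) your homotopy argument does not close.
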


This theorem will be proven in Sections~\ref{secAA} and \ref{secBB}. 
We will first establish assertions (a)-(c) and then show equality (d) for the indices.

\paragraph{5. An index formula.}

We denote by  $S^*M_{\mathbb{Z}}$ the cosphere bundle of $M_{\mathbb{Z}}$  of radius $R$ with $R$ as in Theorem \ref{main}(c) 
and consider the space  $\Lambda(S^*M_{\mathbb{Z}},\End\mathcal{E})$ 
of differential forms on   $S^*M_{\mathbb{Z}}$ 
ranging in endomorphisms of the Hilbert bundle  $\mathcal{E}$. 
This space is endowed with the differential  $d$ 
(the differential is well defined, since   $\mathcal{E}$ is flat). 
Taking the fiberwise trace of operators in  
$l^2(\mathbb{Z})$ gives the (partially defined) mapping
$$
\tr_\mathcal{E}: \Lambda(S^*M_{\mathbb{Z}},\End\mathcal{E})\lra \Lambda(S^*M_{\mathbb{Z}}).
$$
\begin{definition}\label{def-4z}{\em The topological index} of an elliptic operator $\mathcal{D}$ is the number
\begin{equation}\label{eq-indt2c} 
 \ind_t  \mathcal{D} =\sum_j C_j\int_{S^*  M_{\mathbb{Z} }}\tr_\mathcal{E}
\left[(\sigma(\mathcal{D}) ^{-1}d\sigma(\mathcal{D}))^{2j-1}\Td(T^*_\mathbb{C}M) \right] ,
\end{equation}
where $C_j= {(j-1)!}/[{(2\pi i)^j(2j-1)!}]$.

The properties of the topological index will be studied in Section 4.2, below.\end{definition}

We endow $M_\mathbb{Z}$  with the metric $h+dt^2$, where $h$ is the $g$-invariant metric on  $M$. 
Since $g$ is an isometry,
this metric is well-defined and we have the equality  
$$
\Td(T^*_\mathbb{C}M)=\Td(T^*_\mathbb{C}M_\mathbb{Z})
$$
of the differential forms, 
which represent the Todd classes of the complexification of 
the cotangent bundles of   $M$ and $M_\mathbb{Z}$, respectively.

\begin{theorem}\label{th-luke1a}
For the elliptic operator $\mathcal{D}$ in \eqref{eq-diag1}
with symbol $\sigma(\mathcal{D})$  the topological index 
\eqref{eq-indt2c} is well defined provided that $n>1$,
and one has the index formula 
\begin{equation}\label{ind-luke1a}
 \ind \mathcal{D}=\ind_t  \mathcal{D}.
\end{equation}
\end{theorem}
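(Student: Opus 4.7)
\textbf{Proof plan for Theorem \ref{th-luke1a}.}

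The strategy is to recognize the formula \eqref{eq-indt2c} as a Fedosov-type index formula for an elliptic differential operator on a closed manifold whose symbol, however, takes values in operators on an infinite-dimensional Hilbert space. The operator $\mathcal{D}$ acts between sections of the flat Hilbert bundle $\mathcal{E}$ over the closed manifold $M_{\mathbb{Z}}$, and by Theorem \ref{main}(c) it is elliptic in the operator-valued sense. Such an operator fits squarely into the framework of elliptic theory on Hilbert bundles of Mishchenko--Fomenko/Luke type, which is exactly the setting of \cite{NSScS99}. The Fedosov formula in that framework expresses the index as an integral over the cosphere bundle of the fiberwise trace of $(\sigma^{-1}d\sigma)^{2j-1}\,\Td(T^*_\mathbb{C}M_\mathbb{Z})$ with the combinatorial constants $C_j$; once it applies, \eqref{ind-luke1a} is immediate. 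The task is therefore to verify its applicability in our concrete situation.

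First I would record the analytic structure of the symbol. By \eqref{orb-symbol1}, $\sigma(\mathcal{D})(x,\xi,t,\tau)=\sigma(D)(x,\xi)\#(i\tau+t+n)$, and on each fiber $l^2(\mathbb{Z})$ the operator of multiplication by $i\tau+t+n$ is unbounded but invertible for $(\xi,\tau)\neq 0$ large. Hence the $\#$-product formalism gives $\sigma(\mathcal{D})^{-1}$ as a bounded operator $l^2(\mathbb{Z},\mu_{\xi,\tau,s-1})\to l^2(\mathbb{Z},\mu_{\xi,\tau,s})$, and the matrix entries of $\sigma(\mathcal{D})^{-1}$ decay rapidly in $n$ because of the linear growth in $n$ of the diagonal factor. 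Differentiating $\sigma(\mathcal{D})$ in the variables $(x,\xi,t,\tau)$ produces the entries of $d\sigma(\mathcal{D})$, which in the $t$-variable gives just the identity and in the $(x,\xi)$-variables involves only the symbol $\sigma(D)$ at the shifted covectors $\partial g^n(x,\xi)$. The key analytic step is to show that for sufficiently many factors, $(\sigma(\mathcal{D})^{-1}\,d\sigma(\mathcal{D}))^{2j-1}$ is a trace-class endomorphism of $\mathcal{E}$: this follows from the $n$-decay coming from $\sigma(\mathcal{D})^{-1}$ combined with an analysis of how the shifts $\mathcal{T}^k$ move matrix entries along the diagonal. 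The trace-class bound, together with the finite dimensionality of $M_\mathbb{Z}$, forces $\tr_{\mathcal{E}}$ to be defined on each term of the sum, and the condition on $\dim M$ ensures that the relevant powers $2j-1$ needed to fill the top degree on $S^*M_\mathbb{Z}$ already lie in the trace-class range.

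Once trace-class is established, the proof reduces to the abstract Fedosov/Luke index theorem for elliptic differential operators on a flat Hilbert bundle over a closed oriented manifold. I would apply it directly: the Chern character of the symbol class $[\sigma(\mathcal{D})]\in K^1(T^*M_{\mathbb{Z}}\setminus 0)$ is represented by the odd form $\sum_j C_j\,\tr_{\mathcal{E}}(\sigma^{-1}d\sigma)^{2j-1}$; pairing with $\Td(T^*_\mathbb{C}M_{\mathbb{Z}})=\Td(T^*_\mathbb{C}M)$ over $S^*M_\mathbb{Z}$ gives the analytic index by the general Atiyah--Singer/Fedosov formalism adapted to Hilbert bundles. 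Flatness of $\mathcal{E}$ is essential, since it makes the exterior derivative on $\Lambda(S^*M_\mathbb{Z},\End\mathcal{E})$ well defined and makes the Chern--Weil representative a closed form whose cohomology class only depends on the homotopy class of the symbol.

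The main obstacle is the trace-class / summability verification in the $l^2(\mathbb{Z})$-fiber, because the symbol is genuinely unbounded in the $n$-direction and one needs quantitative control on how $d\sigma(\mathcal{D})$ and $\sigma(\mathcal{D})^{-1}$ interact under repeated multiplication. Everything else is an application of \cite{NSScS99} to a setting that has been carefully arranged in Theorem \ref{main} precisely so that the hypotheses of that theorem are met; the restriction $n>1$ appears as the dimensional threshold at which the powers of $\sigma^{-1}d\sigma$ required by \eqref{eq-indt2c} become trace class in $\mathcal{E}$.
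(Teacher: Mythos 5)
There is a genuine gap. You correctly identify the analytic half of the argument -- that the integrand $\tr_{\mathcal{E}}\bigl[(\sigma^{-1}d\sigma)^{2j-1}\Td(T^*_{\mathbb{C}}M)\bigr]$ is only partially defined because low powers of $\sigma^{-1}d\sigma$ need not be trace class, that the compact-variation property of the derivatives of the symbol supplies the required decay, and that $n>1$ is exactly the threshold at which the powers contributing to the top-degree integral over $S^*M_{\mathbb{Z}}$ fall into the trace-class range. This matches Lemma~\ref{prop-tind} and its Corollary in the paper. But you then dispose of the actual index identity by invoking ``the abstract Fedosov/Luke index theorem for elliptic operators on a flat Hilbert bundle'' and declaring \eqref{ind-luke1a} immediate. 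No such cohomological theorem is available off the shelf: Luke's theorem (and the index formula in \cite{Luk1,NSScS15}) is $K$-theoretic, $\ind = p_![\sigma]$, and does not by itself identify the index with the Chern--Weil expression \eqref{eq-indt2c} when the fiber is infinite-dimensional. Indeed the form $\sum_j C_j\tr_{\mathcal{E}}(\sigma^{-1}d\sigma)^{2j-1}$ does not even represent the Chern character of $[\sigma]$ in the usual sense, since its low-degree components are undefined. Establishing the cohomological formula is the content of the theorem, not a citation.

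The paper closes this gap in three steps that your proposal omits. First, $\mathcal{D}$ is replaced by an order-zero pseudodifferential operator $\mathcal{D}'$ with symbol homogeneous of degree zero and agreeing with $\sigma(\mathcal{D})$ on $|(\xi,\tau)|=R$; equality of indices follows from the $K$-theoretic formula $\ind = p_![\sigma]$. Second, the symbol is normalized to $\sigma'(x,\xi,t,\tau)=\sigma(x,\xi,t,\tau)\sigma^{-1}(x,0,t,\tau_0)$, which changes neither the analytic nor the topological index (logarithmic property; $\sigma(x,0,t,\tau_0)$ is the symbol of a multiplication operator). Third, $\sigma'-1$ is approximated by a symbol $\sigma''$ with values in finite-rank operators; then $1+\sigma''$ is homotopic to $\sigma'$, both sides of \eqref{ind-luke1a} are homotopy invariant (here Lemma~\ref{prop-tind}(b) and $n>1$ are used again), and for $1+\sigma''$ the identity reduces to the classical Atiyah--Singer/Fedosov formula for an operator acting in a finite-dimensional bundle. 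Without this reduction to the finite-rank case your argument is circular: it assumes in infinite dimensions precisely the formula it is supposed to prove.
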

Equalities  \eqref{eq-eq1} and \eqref{ind-luke1a} give the index formula for the original operator $D$.
Theorem \ref{th-luke1a} will be proven in Section \ref{sec2}.

\section{Reduction to the Orbit Space}\label{secAA}

\paragraph{1. Isomorphism of Sobolev spaces.}

In order to see that the mapping~\eqref{eq-isom3} is an isomorphism, 
we shall use the two lemmas, below. 

Let   $\gamma$ be the one-dimensional complex vector bundle over   
$M_\mathbb{Z}\times \mathbb{S}^1$ defined by the equality
\begin{equation*}\label{bottbundle0}
C^\infty(M_\mathbb{Z} \times \mathbb{S}^1,\gamma)=\{ v\in C^\infty(M\times \mathbb{R}\times \mathbb{S}^1)
\; |\; v(g(x),t+1,\varphi)=v(x,t,\varphi)e^{-  i\varphi}\}.
\end{equation*}

\begin{lemma}\label{lem1}
The mapping
$$
 \begin{array}{ccc}
   K:H^s(M_\mathbb{Z},\mathcal{E}) & \lra  &H^s(M_\mathbb{Z}\times \mathbb{S}^1,\gamma)\vspace{1mm}\\
 (u(x,t,n))_n & \longmapsto & \mathcal{F}_{n\to\varphi} u(x,t,\cdot)=\frac 1{\sqrt{2\pi}}\sum_n u(x,t,n)e^{in\varphi},
 \end{array}
$$
which is just the fiberwise Fourier transform (series), is an isomorphism for all   $s$. 
\end{lemma}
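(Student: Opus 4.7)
The plan is to reduce the claim to the classical Plancherel isomorphism $l^2(\mathbb{Z})\cong L^2(\mathbb{S}^1)$ implemented by Fourier series, together with the observation that this isomorphism conjugates multiplication by $n^2$ into the differential operator $-\partial_\varphi^2$.

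First I would verify that $K$ is well-defined at the level of sections. A section of $\mathcal{E}$, viewed as an equivariant map $u:M\times\mathbb{R}\to l^2(\mathbb{Z})$, satisfies the relation $u(g(x),t+1,n)=u(x,t,n-1)$ read off from the $\mathbb{Z}$-action defining $\mathcal{E}$. A shift of the summation index in the defining sum then shows that $v=Ku$ satisfies the twisted equivariance built into $\gamma$, and the inverse Fourier integral $v\mapsto u(x,t,n)=\frac{1}{\sqrt{2\pi}}\int_0^{2\pi}v(x,t,\varphi)e^{-in\varphi}\,d\varphi$ furnishes a two-sided inverse at the level of equivariant functions.

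Next I would identify the Sobolev norm on $H^s(M_\mathbb{Z}\times\mathbb{S}^1,\gamma)$ with the lifted expression $\int_{M\times[0,1]\times\mathbb{S}^1}|(1+\Delta_{x,t}-\partial_\varphi^2)^{s/2}v|^2\,dx\,dt\,d\varphi$. Equip $M_\mathbb{Z}$ with the metric descended from $h+dt^2$ and take the product metric on $M_\mathbb{Z}\times\mathbb{S}^1$. Because the cocycle $e^{-i\varphi}$ defining $\gamma$ is independent of $(x,t)$, the bundle carries a canonical flat unitary connection whose Laplacian lifts to $\Delta_{x,t}-\partial_\varphi^2$ on equivariant functions on the cover. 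Since the fiberwise Fourier transform commutes with $\Delta_{x,t}$ and intertwines multiplication by $n^2$ with $-\partial_\varphi^2$, it conjugates the weight $(1+\Delta_{x,t}+n^2)^{s/2}$ used in \eqref{norms1} into $(1+\Delta_{x,t}-\partial_\varphi^2)^{s/2}$. Combining this with the $s=0$ Plancherel identity applied pointwise in $(x,t)$ and integrating over the fundamental domain $M\times[0,1]$ yields the norm equality for every real $s$.

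The only non-routine point I anticipate is the identification of the Sobolev norm on $H^s(M_\mathbb{Z}\times\mathbb{S}^1,\gamma)$ with the lifted expression above, i.e., matching the geometric Laplacian on $\gamma$-sections with $\Delta_{x,t}-\partial_\varphi^2$ on the cover and invoking the standard equivalence of Sobolev norms on a closed manifold. Once this bookkeeping is carried out, the conclusion is an immediate application of Plancherel.
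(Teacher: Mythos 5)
Your argument follows the same route as the paper's proof: Plancherel for the fiberwise Fourier series, combined with the fact that $\mathcal{F}_{n\to\varphi}$ commutes with $\Delta_{x,t}$ and intertwines multiplication by $n^2$ with $-\partial_\varphi^2$, so that the norm \eqref{norms1} is carried to the norm \eqref{norms2}. The equivariance check and the explicit inverse are the right supporting steps (your sign in $u(g(x),t+1,n)=u(x,t,n-1)$ reflects the paper's own inconsistency between the stated $\mathbb{Z}$-action on $\mathcal{E}$ and the formula for $I$, so I would not dwell on it). The one concrete error is the claim that $\gamma$ carries a canonical flat unitary connection because its cocycle is independent of $(x,t)$: the cocycle $e^{-i\varphi}$ depends on the base point $\varphi$, and restricted to the $(t,\varphi)$-torus $\gamma$ is the Bott (Poincar\'e) line bundle with first Chern class $\pm 1$, so no flat connection exists; relatedly, $-\partial_\varphi^2$ does not preserve the twisted periodicity $v(g(x),t+1,\varphi)=v(x,t,\varphi)e^{-i\varphi}$ and hence is not literally the $\varphi$-part of any connection Laplacian on $\gamma$. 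The repair is the one implicit in the paper's Appendix: the operator that does descend is $(-i\partial_\varphi+t)^2$ (connection form $it\,d\varphi$, nonzero curvature $i\,dt\wedge d\varphi$), which differs from $-\partial_\varphi^2$ by lower-order terms over the fundamental domain $t\in[0,1]$. Since the lemma only requires equivalence of norms, not equality, your proof goes through once the flatness claim is replaced by this modulo-lower-order identification.
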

\begin{proof}
This is an immediate consequence of the fact that the Fourier transform 
$\mathcal F_{n\to \varphi}$ takes the norm \eqref{norms1} defining
$H^s(M_\mathbb{Z},\mathcal{E})$, 
to the  norm
\begin{equation}\label{norms2}
 \|u\|_{s}=\left\|\left(1+\Delta_{x,t}-\frac{\partial^2}{\partial\varphi} \right)^{s/2} u(x,t,\varphi)\right\|_{L^2(M_{\mathbb Z}\times \mathbb{S}^1,\gamma)} 
\end{equation}
defining $H^s(M_\mathbb{Z}\times \mathbb{S}^1,\gamma)$, 
if we consider this space
as the space of functions on $M_\mathbb{Z}$ 
taking values in $\mathbb{S}^1$, 
cf.\ \cite{NSScS99}, Section~12.2.1.
\end{proof}

\begin{lemma}\label{lem2}
The mapping
$$
   KI:\H^s(M\times \mathbb{R}) \lra  H^s(M_\mathbb{Z}\times \mathbb{S}^1,\gamma) 
$$
is an isomorphism for all $s$.
\end{lemma}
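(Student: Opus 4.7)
The plan is to construct an explicit two-sided inverse of $KI$ on smooth functions, verify an $L^2$ isometry, and then control higher Sobolev norms via a key intertwining relation; the lemma then follows by density and, where needed, interpolation. Combined with Lemma~\ref{lem1}, this also gives Theorem~\ref{main}(a), since then $I=K^{-1}(KI)$ is a composition of isomorphisms.

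Given a section $v$ of $\gamma$, realised as a function on $M\times\mathbb R\times\mathbb S^1$ with $v(gx,t+1,\varphi)=e^{-i\varphi}v(x,t,\varphi)$, I would set
\[
(Jv)(x,t)=\frac{1}{\sqrt{2\pi}}\int_0^{2\pi}v(x,t,\varphi)\,d\varphi.
\]
Because $KIf(x,t,\varphi)$ is, up to the factor $1/\sqrt{2\pi}$, the Fourier series in $\varphi$ of the sequence $(f(g^n x,t+n))_n$, integration against $d\varphi/\sqrt{2\pi}$ picks out the $n=0$ coefficient, giving $JKIf=f$. Conversely, the equivariance of $v$ forces the $n$-th Fourier coefficient of $v(x,t,\cdot)$ to equal $\sqrt{2\pi}\,(Jv)(g^n x,t+n)$, whence $KIJ=\mathrm{id}$ on smooth sections. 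The $L^2$ case is then an isometry computation: Plancherel in $\varphi$, the isometry property of $g$, and the tiling of $M\times\mathbb R$ by $\{M\times[n,n+1]\}_{n\in\mathbb Z}$ yield
\[
\|KIf\|^2_{L^2(M_{\mathbb Z}\times\mathbb S^1,\gamma)}=\sum_n\int_M\!\int_0^1|f(g^n x,t+n)|^2\,dt\,dx=\|f\|^2_{L^2(M\times\mathbb R)}.
\]

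For higher Sobolev regularity, the core observation is the intertwining
\[
(1+\Delta_{x,t}-\partial_\varphi^2)\,KIf(x,t,\varphi)=\frac{1}{\sqrt{2\pi}}\sum_n\bigl[(1+\Delta_{x,t}+n^2)f\bigr](g^n x,t+n)\,e^{in\varphi},
\]
which holds because $\Delta_{x,t}$ commutes with pull-back by $g^n$ (as $g$ is an isometry and $\partial_t$ is translation-invariant) and $-\partial_\varphi^2$ acts as multiplication by $n^2$ on the $n$-th Fourier mode. Iterating and applying the Plancherel/tiling computation above gives, for $k\in\mathbb N$,
\[
\|(1+\Delta_{x,t}-\partial_\varphi^2)^k KIf\|^2_{L^2}=\int_{M\times\mathbb R}\bigl|(1+\Delta_{x,t}+[t]^2)^k f(x,t)\bigr|^2\,dx\,dt,
\]
where $[t]$ denotes the integer part of $t$. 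Since $1+[t]^2\asymp 1+t^2$ uniformly in $t$, the right-hand side is equivalent to the Shubin-type norm $\|(1+\Delta_{x,t}+t^2)^k f\|^2_{L^2}$, which by the Appendix defines $\H^{2k}(M\times\mathbb R)$. This proves the lemma for even non-negative integer $s$; the general case $s\in\mathbb R$ then follows by duality and complex interpolation within the Shubin scale.

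\textbf{Main obstacle.} The delicate step is this final norm equivalence: $KI$ naturally produces the piecewise-constant weight $[t]^2$, whereas $\H^s(M\times\mathbb R)$ is defined via the smooth harmonic-oscillator weight $t^2$. For integer exponents the two weighted norms are elementarily comparable, but for non-integer $s$ one must pass to the functional calculus (or Weyl--Shubin pseudodifferential calculus) of $1+\Delta_{x,t}+t^2$, using that the discrepancy $t^2-[t]^2=O(1+|t|)$ is strictly subdominant relative to the weight $1+t^2$ itself. Once that equivalence is established, everything else is routine bookkeeping with Plancherel and the $\mathbb Z$-action.
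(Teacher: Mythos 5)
Your argument is correct and rests on the same mechanism as the paper's proof: $KI$ intertwines the Shubin-type operator $1+\Delta_{x,t}+t^2$ that generates the scale $\H^s(M\times\mathbb R)$ with the elliptic operator generating $H^s(M_\mathbb{Z}\times\mathbb S^1,\gamma)$, up to a lower-order discrepancy --- your weight comparison $t^2$ versus $[t]^2$ is exactly the paper's ``modulo lower order terms''. The paper compresses your explicit inverse, the $L^2$ isometry, the intertwining identity, and the interpolation/duality step into a two-line sketch, so you have essentially supplied the details it omits.
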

\begin{proof}
We first note that $KI$ takes the rapidly decreasing functions on $M\times
\mathbb R$ to the smooth sections of $\gamma$.  The assertion then 
follows from the fact that this mapping takes the operator  
$$
\Delta_x+t^2-\frac{\partial^2 }{\partial t^2},
$$
which is the base for  the norm in   $\H^s(M\times \mathbb{R}) $, 
to the Laplacian (modulo lower order terms)
$$
\Delta_x-\frac{\partial^2 }{\partial \varphi^2}  -\frac{\partial^2 }{\partial t^2},
$$
which induces the norm in  $H^s(M_\mathbb{Z}\times \mathbb{S}^1,\gamma)$. 
\end{proof}

So,    $I=K^{-1}(KI)$ is an isomorphism as the composition of 
two isomorphisms. 

\paragraph{2. Computation of the operator-valued symbol.}

Let
$$
B=\sum_k B_k\left(x,-i\frac{\partial}{\partial x},t,-i\frac{\partial}{\partial t}\right)\widetilde{T}^k
$$
be a differential operator with shifts on $M\times \mathbb R$.
We have
\begin{multline*}
 (u(x,t,n))_n 
 \stackrel{I^{-1}}\longmapsto u(g^{-[t]}(x),\{t\},[t]) \stackrel{B}\longmapsto\\
\sum_k B_k\left(x,-i\frac{\partial}{\partial x},t,-i\frac{\partial}{\partial t}\right)
u(g^{-[t]}(x),\{t\},[t+k])
\stackrel{I}\longmapsto\\
\sum_k B_k\left(g^n(x),-i\frac{\partial}{\partial (g^n(x))},t+n,-i\frac{\partial}{\partial t}\right)\mathcal{T}^k u(x,t,n).
\end{multline*}
Thus, the operator $IBI^{-1}$ is indeed a differential operator on    
$M_\mathbb{Z}$ acting in sections of the infinite-dimensional bundle 
$\mathcal{E}$. 
Its symbol in the sense of \cite{NSScS99}, Section 3.3, is equal to
$$
\sigma(IBI^{-1})(x,\xi,t,\tau)=
\sum_k\sigma(B_k)(\partial g^n(x,\xi),t+n,\tau)\mathcal{T}^k:l^2(\mathbb{Z},\mu_{\xi,\tau,s})\to
l^2(\mathbb{Z},\mu_{\xi,\tau,s-1}).
$$
An easy computation shows that it smoothly depends on   $x,\xi,t,\tau$, 
satisfies the necessary estimates  
and has the compact variation property:  
The derivatives 
$$
\frac{\partial \sigma(IBI^{-1})}{\partial \xi},\;\; 
\frac{\partial \sigma(IBI^{-1})}{\partial \tau}:l^2(\mathbb{Z},\mu_{\xi,\tau,s})\to
l^2(\mathbb{Z},\mu_{\xi,\tau,s-1})
$$
are compact operators for all  $x,\xi,t,\tau$.

This gives us the desired mapping property 
\eqref{orb-symbol1} for the special case when  $B=\widetilde{D}\# A$.

\paragraph{3. Ellipticity condition.}

\begin{lemma}
Let $D$ be elliptic. Then the operator $\mathcal{D}$ is also elliptic, 
i.e., its symbol \eqref{orb-symbol1} is invertible provided that 
$|\xi|^2+\tau^2$ is large enough.
\end{lemma}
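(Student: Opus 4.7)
Set $b:=\sigma(D)(x,\xi)$ and let $a$ denote the multiplication operator on $l^2(\mathbb{Z})$ given by $(aw)(n)=(i\tau+t+n)w(n)$, so that
\[
\sigma(\mathcal{D})(x,\xi,t,\tau)=\begin{pmatrix}b & a\\ -a^* & b^*\end{pmatrix}.
\]
The plan is to establish invertibility of $\sigma(\mathcal{D})$ by showing that $\sigma(\mathcal{D})^*\sigma(\mathcal{D})$ is strictly positive once $|\xi|^2+\tau^2$ is large. Since $a$ and $a^*$ are both multiplication operators, they commute, giving $aa^*=a^*a=(t+n)^2+\tau^2$, and a direct block computation yields
\[
\sigma(\mathcal{D})^*\sigma(\mathcal{D})=\begin{pmatrix}b^*b+(t+n)^2+\tau^2 & [b^*,a]\\ [a^*,b] & bb^*+(t+n)^2+\tau^2\end{pmatrix}.
\]

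For the diagonal blocks I would use the ellipticity of $D$: since $\sigma(D)(x,\xi)$ is invertible on $l^2(\mathbb{Z})$ for every $(x,\xi)\ne 0$ and homogeneous of degree one in $\xi$, compactness of the unit cosphere bundle of $M$ furnishes a uniform lower bound $b^*b,\ bb^*\ge c|\xi|^2\operatorname{Id}$ on $l^2(\mathbb{Z})$ with $c>0$ independent of $(x,\xi)$. Combined with $(t+n)^2+\tau^2\ge \tau^2$, the diagonal blocks of $\sigma(\mathcal{D})^*\sigma(\mathcal{D})$ dominate $(c|\xi|^2+\tau^2)\operatorname{Id}$. For the off-diagonal entries I would use the elementary identity $[a,\mathcal{T}^k]=-k\,\mathcal{T}^k$, which gives
\[
[a,b]=-\sum_k k\cdot M_{\sigma(D_k)(\partial g^n(x,\xi))}\,\mathcal{T}^k,
\]
an operator of the same structural form as $b$ itself; since the sum is finite and $|\sigma(D_k)(\partial g^n(x,\xi))|\le C|\xi|$ (as $\partial g$ is an isometry and $\sigma(D_k)$ is homogeneous of degree one), this yields $\|[a,b]\|_{l^2\to l^2}\le C'|\xi|$, and analogously for $[a^*,b]$.

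Combining the diagonal lower bound and the off-diagonal upper bound via a Schur-type inequality for block-positive operators gives
\[
\sigma(\mathcal{D})^*\sigma(\mathcal{D})\ \ge\ (c|\xi|^2+\tau^2-C'|\xi|)\operatorname{Id}
\]
on $l^2(\mathbb{Z})\oplus l^2(\mathbb{Z})$, which is strictly positive for $|\xi|^2+\tau^2\ge R^2$ with $R$ large enough: in the regime where $|\xi|$ is large the term $c|\xi|^2$ dominates $C'|\xi|$; in the complementary regime where $|\xi|$ is bounded, $\tau^2$ is then forced to be large and dominates $C'|\xi|$. The symmetric argument applied to $\sigma(\mathcal{D})\sigma(\mathcal{D})^*$ gives right invertibility.

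The main technical obstacle I anticipate is transferring this unweighted $l^2(\mathbb{Z})$-estimate to the weighted scale $l^2(\mathbb{Z},\mu_{\xi,\tau,s})$ appearing in \eqref{orb-symbol1}. Writing $\rho_n:=(1+\xi^2+\tau^2+n^2)^{1/2}$ and conjugating by the multiplication operator $\rho_n^s$, the required invertibility of $\sigma(\mathcal{D}):l^2(\mu_{\xi,\tau,s})\to l^2(\mu_{\xi,\tau,s-1})$ becomes an $l^2$-invertibility claim for $\rho_n^{s-1}\sigma(\mathcal{D})\rho_n^{-s}$. Because the ratios $\rho_n/\rho_{n+k}$ are uniformly bounded for each fixed $k$ in the finite sum defining $D$, the conjugated shifts $\rho_n^{s-1}\mathcal{T}^k\rho_n^{-s}$ differ from $\mathcal{T}^k$ only by bounded multiplication factors; the corrections they introduce are of lower order in $\xi,\tau$ and remain absorbed by the diagonal once $|\xi|^2+\tau^2$ is large, so the positivity argument carries through.
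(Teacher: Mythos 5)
Your proposal is correct and follows essentially the same route as the paper: reduce invertibility on the weighted spaces $l^2(\mathbb{Z},\mu_{\xi,\tau,s})$ to an unweighted $l^2$ statement by conjugating with the weight (the paper's commutator estimate $[\mathcal{T},\delta^s]=\delta^{s-1}P$ is your observation that $\rho_n^{s}/\rho_{n+k}^{s}=1+O(\rho^{-1})$), and then prove uniform positivity of $\sigma^*\sigma$ and $\sigma\sigma^*$ for large $|(\xi,\tau)|$, using ellipticity of $D$ for the diagonal blocks and the fact that the off-diagonal blocks are of lower order. The only point to be careful about is that the unweighted positivity estimate must actually be run on the conjugated symbol $\delta^{s-1}\sigma(\mathcal{D})\delta^{-s}$ (since $a=i\tau+t+n$ is unbounded on plain $l^2(\mathbb{Z})$ and the raw bound $\|\sigma(\mathcal{D})w\|_{l^2}\ge\varepsilon\|w\|_{l^2}$ is not the estimate between $l^2(\mu_s)$ and $l^2(\mu_{s-1})$ that is needed), which your final paragraph correctly arranges.
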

\begin{proof}
Consider the commutative diagram
$$
 \xymatrix{
    l^2(\mathbb{Z},\mu_{\xi,\tau,s}) \ar^{\sigma(\mathcal{D})(x,\xi,t,\tau)}[rr] \ar[d]_{\delta^{s}} & &
     l^2(\mathbb{Z},\mu_{\xi,\tau,s-1})  \ar[d]^{\delta^{s-1}}\\
   l^2(\mathbb{Z})  \ar[rr]_{\sigma_s(x,\xi,t,\tau)} &  & l^2(\mathbb{Z}),
 }
$$
where $\delta=(1+|\xi|^2+\tau^2+n^2)^{1/2}$.  In this diagram, the vertical mappings are isomorphisms, and the operator    $\sigma_s(x,\xi,t,\tau)$ is defined as
$$
 \sigma_s(x,\xi,t,\tau)=\delta^{s-1}\sigma(\mathcal{D})(x,\xi,t,\tau) \delta^{- s}.
$$ 
We claim that the operator
$$
\sigma_s(x,\xi,t,\tau):l^2(\mathbb{Z})\lra l^2(\mathbb{Z})
$$ 
is invertible for large $\xi,\tau$ and the norm of the inverse operator is uniformly bounded.  We have (the variables $x,\xi,t,\tau$ are omitted for brevity)
\begin{multline}\label{eq-est11}
\sigma_s=\delta^{s-1}\bigl(\sigma(D)\#\sigma(A)\bigr)\delta^{- s}=\bigl(\delta^{s-1}\sigma(D)\delta^{- s}\bigr)\#\delta^{-1}\sigma(A)=\\
=(\sigma(D)\delta^{-1} \# \delta^{-1}\sigma(A))+Q.
\end{multline}
Here and below $Q:l^2(\mathbb{Z})\to l^2(\mathbb{Z})$ stands for operator families of order $-1$ in the scale $l^2(\mathbb{Z},\mu_{\xi,\tau,s})$.  The first equality here is just the definition of the symbol  $\sigma_s$;
the second holds, because   $A$ has no shifts. 
The third follows from the fact that the commutator $[\mathcal{T},\delta^s]$ is an operator of order $\le s-1$:
\begin{multline*}
[\mathcal{T},\delta^s]
=(\mathcal{T}\delta^s\mathcal{T}^{-1}-\delta^s) \mathcal {T}
=[(1+\xi^2+\tau^2+(n+1)^2)^{s/2}-(1+\xi^2+\tau^2+n^2)^{s/2}]\mathcal{T}\\
=\delta^s\left(\left(1+\frac{2n+1}{1+\xi^2+\tau^2+n^2} \right)^{s/2}-1\right)
\mathcal{T}=
\delta^{s-1 }P,
\end{multline*}
where $P$ is a uniformly bounded operator family. 
Here we have used a Taylor expansion of   $(1+\alpha)^{s/2}$ at $\alpha=0$ for the last equality.
With a slight modification of the argument, we could also have obtained the form 
$\delta^{-1}\sigma(D) \# \sigma(A)\delta^{-1}+Q$ in  \eqref{eq-est11}.

To construct a left inverse of  $\sigma_s$, consider the operator $\sigma_s^*\sigma_s$.  A direct computation using \eqref{eq-est11} shows that
\begin{equation*}
\sigma_s^*\sigma_s=
\left(
  \begin{array}{cc}
     \delta^{-1}(\sigma(D)^*\sigma(D)+(n^2+\tau^2))\delta^{-1}\!\!\!\!\!\!\! & 0 \\
    0 & \!\!\!\!\!\!\! \delta^{-1}(\sigma(D)\sigma(D)^*+(n^2+\tau^2))\delta^{-1}
  \end{array}
\right)+Q.
\end{equation*}
Let us prove that the self-adjoint operators on the diagonal of this matrix are positive-definite. Indeed, consider for instance the operator in the left upper corner. Using the ellipticity of $D$ we have  with suitably small $c,c'>0$
\begin{multline*}
 \bigl( \delta^{-1 }(\sigma(D)^*\sigma(D)+n^2+\tau^2)\delta^{-1}w,w \bigr)
= \bigl((\sigma(D)^*\sigma(D)+n^2+\tau^2)\delta^{-1}w, \delta^{-1}w\bigr) \\
\ge c|\xi|^2\bigl(\delta^{-1}w, \delta^{-1}w\bigr)
+\bigl((n^2+\tau^2)\delta^{-1}w, \delta^{-1}w\bigr)
\ge c'\bigl(\delta^2 \delta^{-1}w, \delta^{-1}w\bigr)=c'(w,w).
\end{multline*}
So, $ \delta^{-1}(\sigma(D)^*\sigma(D)+n^2+\tau^2)\delta^{-1}$ 
is positive definite, and the operator   $\sigma_s^*\sigma_s$ is the sum of an 
invertible family with uniformly bounded inverse and an operator family
which tends to zero as  $(\xi,\tau)\to\infty$. 
Hence, the family
  $\sigma_s^*\sigma_s$ is uniformly invertible and   
  $(\sigma^*_s\sigma_s)^{-1}\sigma_s^*$ is a left inverse of  $\sigma_s$.

To construct a right inverse for $\sigma_s$, we consider the family $\sigma_s\sigma_s^*$. A similar reasoning shows that it is also uniformly invertible. Therefore, a right inverse of   $\sigma_s$  is equal to $\sigma_s^*(\sigma_s\sigma_s^*)^{-1}$.

The proof of the lemma is complete.
\end{proof}

\section{Equality of the Indices}\label{secBB}

Let us prove equality \eqref{eq-eq1}. 
Since $I$ is an isomorphism, the equality of the indices is a 
corollary of the following proposition.
\begin{proposition}
The operator  $\widetilde{D}\# A$ in \eqref{eq-product1} is Fredholm for all $s$, its index does not depend on $s$, and we have the equality
\begin{equation}\label{eq-eq2}
 \ind (\widetilde{D}\# A)=\ind D.
\end{equation}
\end{proposition}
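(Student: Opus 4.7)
The plan is to prove, in sequence, the three assertions of the proposition: Fredholmness, $s$-independence of the index, and the equality $\ind(\widetilde D\#A) = \ind D$. The first two come essentially for free from Theorem~\ref{main}: part~(a) gives that $I$ is an isomorphism on every Sobolev scale, and parts~(b)--(c) produce from $\widetilde D\#A$ a genuinely elliptic differential operator~$\mathcal{D}$ on the \emph{closed} manifold $M_{\mathbb Z}$. Ellipticity of $\mathcal{D}$ yields Fredholmness for every~$s$, and standard elliptic regularity on $H^s(M_{\mathbb Z},\mathcal{E})$ forces $\ker$ and $\coker$ to consist of smooth sections lying in the intersection of all Sobolev spaces; transporting through~$I$ gives the same statements for $\widetilde D\#A$.

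For the index equality my plan is to apply the multiplicativity of the Fredholm index under external products: since $\ind A=1$, the heuristic $\ind(\widetilde D\#A)=\ind\widetilde D\cdot\ind A$ predicts~$\ind D$. To make this rigorous, I would deform the shift in $\widetilde D$ by introducing
\[
\widetilde T_\lambda u(x,t)=u(g(x),\,t+\lambda),\qquad \widetilde D_\lambda=\sum_k D_k\widetilde T_\lambda^{\,k},\qquad \lambda\in[0,1],
\]
so that $\widetilde D_1=\widetilde D$ while $\widetilde D_0=D\otimes I_{\mathbb R}$ has no $t$-shift. If $\widetilde D_\lambda\#A$ is a continuous family of Fredholm operators, then homotopy invariance yields
\[
\ind(\widetilde D\#A)\;=\;\ind\bigl((D\otimes I_{\mathbb R})\#A\bigr),
\]
and the right-hand side equals $\ind D$ (up to the sign conventions implicit in~\eqref{eq-product1}) by a direct Hermite-basis computation: since $A\psi_n=\sqrt{2n}\,\psi_{n-1}$ and $A^*\psi_n=\sqrt{2n+2}\,\psi_{n+1}$, the kernel/cokernel equations recurse through the Hermite index, and the only surviving modes are in the ground state $\psi_0$, where the system reduces to $\ker D$ and $\ker D^*$.

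The main technical obstacle is Fredholmness along the homotopy. For $\lambda\in(0,1)$ the action generated by $\widetilde T_\lambda$ on $M\times\mathbb R$ is not discrete, so the orbit-space reduction of Section~\ref{secAA} is unavailable and one cannot transport Fredholmness through Theorem~\ref{main}. My plan is to verify Fredholmness directly by a symbolic estimate in the Shubin/SG calculus on $M\times\mathbb R$ modelled on Section~\ref{secAA}.2: the operator-valued symbol of $\widetilde D_\lambda\#A$ should remain invertible for $|\xi|^2+\tau^2+t^2$ large, uniformly in~$\lambda$, with the harmonic-oscillator factor $i\tau+t$ from $A$ supplying invertibility in~$(t,\tau)$ independently of the shift parameter, while the ellipticity of~$D$ handles the $\xi$-direction. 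Establishing this uniform estimate is where I expect the real work to lie.
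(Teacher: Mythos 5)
Your overall strategy coincides with the paper's: deform the shift parameter via $\widetilde T_\lambda$ from $\lambda=1$ to $\lambda=0$, identify $\widetilde D_0\# A$ with the external product $D\#A$ on the cylinder, and compute its kernel and cokernel by the Atiyah--Singer external-product argument (the paper does exactly this, obtaining $\ker B_0=\ker D\otimes\ker A$ and $\coker B_0=\ker D^*\otimes\ker A$, which is your Hermite ground-state computation). The Fredholmness of the endpoints is also handled essentially as you suggest, except that the paper stays on the cylinder and uses the shift-operator calculus of the Appendix (symbol $\sigma(D)\#\sigma(A)$ invertible for $|\xi|^2+t^2+\tau^2\neq 0$) rather than transporting through $I$.

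However, you have misidentified where the real difficulty lies, and this leaves a genuine gap. You write ``if $\widetilde D_\lambda\#A$ is a continuous family of Fredholm operators, then homotopy invariance yields'' the index equality, and you then concentrate on proving Fredholmness uniformly in $\lambda$. But Fredholmness along the path is the easy part: the operator-valued symbol of $B_\lambda$ is literally independent of $\lambda$ (the symbol calculus for shift operators does not see the size of the translation), so each $B_\lambda$ is elliptic and Fredholm with no extra estimate needed. The genuine obstacle is that the map $\lambda\mapsto B_\lambda$ is \emph{not continuous in operator norm} --- translation by $\lambda$ on $\H^s(M\times\mathbb R)$ is only strongly continuous --- and a merely strongly continuous family of Fredholm operators need not have constant index. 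Your proposal never addresses this, so the step ``homotopy invariance yields $\ind(\widetilde D\#A)=\ind((D\otimes I_{\mathbb R})\#A)$'' is unjustified as stated. The paper closes this gap by invoking H\"ormander's Theorem 19.1.10, which requires, beyond strong continuity of $B_\lambda$ and of a parametrix family $B_\lambda^{-1}$, that the remainders $1-B_\lambda B_\lambda^{-1}$ and $1-B_\lambda^{-1}B_\lambda$ be compact and \emph{norm} continuous in $\lambda$. Verifying this is nontrivial: one must show that the inverse symbol is a series $\sum_k a_k T^k$ with coefficients decaying rapidly in $k$ (this uses Schweitzer's spectral invariance of the smooth crossed product), quantize it to get $B_\lambda^{-1}=\sum_k\widehat{a_k}\widetilde T_\lambda^k$ with control on $\|\widetilde T_\lambda^k\|\le C(1+|k|)^{|s|}$, and then check term by term that the remainder families are compact and norm continuous. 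Without some substitute for this argument, your proof of the index constancy along the homotopy does not go through.
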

\begin{proof}
Step 1. The operator $\widetilde{D}\# A$ is an operator with shifts on the cylinder; its 
symbol is 
$$
\sigma(\widetilde{D}\# A)(x,\xi,t,\tau)=\sigma(D)(x,\xi)\#\sigma(A)(t,\tau):
l^2(\mathbb{Z},\mathbb{C}^2)\lra  l^2(\mathbb{Z},\mathbb{C}^2),
$$
cf.\ the Appendix. 
Since $D$ and $A$ are elliptic,  $\sigma(D)(x,\xi)$ is invertible 
whenever $\xi\ne 0$, and  $\sigma(A)(t,\tau)$ is invertible 
whenever $t^2+\tau^2\ne 0$. 
Hence, the external product of these symbols is invertible whenever 
$|\xi|^2+t^2+\tau^2\ne 0$.  
Therefore, $\widetilde{D}\# A$ is elliptic and thus a Fredholm operator.

Step 2. Consider the operator family
$$
B_\varepsilon=\Big(\sum _k D_k \widetilde{T}^k_\varepsilon\Big)\# A:
\H^s(M\times \mathbb R, \mathbb C^2) 
\lra \H^{s-1}(M\times \mathbb R,\mathbb C^2),
$$
where $\widetilde{T}_\varepsilon u(x,t)=u(g(x),t+\varepsilon)$.  By construction
$$
 B_1=\widetilde{D}\# A.
$$
For a parametrix construction, we can associate to $B_\varepsilon$ 
the same symbol \eqref{shiftsymb}, independent of $\varepsilon$. 
This makes sense as the results in the Appendix are independent of the 
size of the shift. 
Then it is clear that the operators $B_\varepsilon$ are elliptic 
and hence Fredholm operators for  $\varepsilon\in [0,1]$.
As $D$ and $A$ commute, the technique of external products, 
cf. Section 9 in  \cite{AtSi1}, shows that 
$$
 \ker B_0=\ker B_0^*B_0=\ker D\otimes \ker A\simeq \ker D
$$ 
and similarly
$$
 \coker B_0=\ker B_0B_0^*=\ker D^*\otimes \ker A\simeq \coker D.
$$ 

Step 3. 
Thus, to prove   equality  \eqref{eq-eq1}, it remains to show that the index of    $B_\varepsilon$ does not depend on $\varepsilon$. 

This is not trivial, as the mapping   
$\varepsilon \mapsto B_\varepsilon$ is not continuous in operator norm. 
For the proof, we rely on Theorem 19.1.10 in   H\"ormander \cite{Hor3}.
We will proceed as follows:
The family $B_\varepsilon$ consists of Fredholm operators and is 
strongly continuous in  $\varepsilon$. 
Using a result of Schweitzer \cite{Schwe1}, we will show that 
the inverse symbol $\sigma(B_\varepsilon)^{-1}$
is represented by a series which converges rapidly, uniformly in $\varepsilon$.  
From this we infer the strong continuity of the associated operator family 
$B_\varepsilon^{-1}$.
Moreover, the families 
\begin{equation}\label{eq-ugly1}
K_{1,\varepsilon}=1-B_\varepsilon B_\varepsilon^{-1}
\quad\text{and}\quad 
K_{2,\varepsilon} =1-B_\varepsilon^{-1}B_\varepsilon
\end{equation}
consist of compact operators and are norm continuous.
This implies the uniform compactness condition in  H\"ormander's theorem. 

Here are the details: 
We represent the symbol of the operator with shifts as an element of the 
subalgebra of rapidly decreasing sequences with values in the algebra 
$\mathcal A$ of all smooth functions on the sphere $\{|\xi|^2+t^2+\tau^2=1\}$
in the crossed product algebra  $ \mathcal B\rtimes\mathbb Z$, 
where 
$\mathcal B $ denotes the continuous functions on that sphere and $\mathbb Z$ 
acts (abstractly) by powers of $T$. 
We then write
$$
\sigma(B_\varepsilon)=\sum_k b_k T^k;
$$
here the sum is finite and the right-hand side does not depend on  
$\varepsilon$. 
By Theorem 6.7 in \cite{Schwe1} (for the required strong spectral invariance
see Remark  I.1.14 in \cite{NaSaSt17}), the inverse symbol is equal to
$$
\sigma(B_\varepsilon)^{-1}=\sum_k a_k T^k, 
$$
where the coefficients  $a_k$ are smooth symbols, 
whose seminorms all tend to zero faster than any power of $k$.
Let
$$
B_\varepsilon^{-1}=\sum_k \widehat{a_k} \widetilde{T}^k_\varepsilon,
$$
where the operators $\widehat{a_k}$ are chosen such that all their seminorms 
are rapidly decreasing in $k$. 
A direct computation shows the estimate
$\|\widetilde{T}^k_\varepsilon\|\le C(1+|k|)^{|s|}$ for the norm of 
$\widetilde T^k_\varepsilon$ on $\H^s(M\times \mathbb R)$.
Hence the series for $B_\varepsilon^{-1}$ converges.
Moreover, the strong continuity of $T_\varepsilon$ and the rapid decay of the
$\widehat {a_k}$ imply  the strong continuity of  
$\varepsilon\mapsto B^{-1}_\varepsilon$. 

It remains to consider the families  \eqref{eq-ugly1}. 
Let us first consider $K_{1,\varepsilon}$.  
We infer from Cauchy's product formula that
$$
1-B_\varepsilon B_\varepsilon^{-1}=
1-  \sum_{n+k=0}  \widehat{b_n} 
     \widetilde{T}^n_\varepsilon \widehat{a_k} \widetilde{T}^{-n}_\varepsilon 
 -
    \sum_{l\not=0}
     \left(  \sum_{n+k=l}  \widehat{b_n} 
     \widetilde{T}^n_\varepsilon \widehat{a_k} \widetilde{T}^{-n}_\varepsilon \right) 
     \widetilde{T}^{l}_\varepsilon .
$$
By construction, the coefficient of each power of $\widetilde T_\varepsilon$ 
is a compact operator. 
As a consequence, the sum of the 
first two terms on the right hand side is compact, and so is each of the terms
$L_{l,\varepsilon}= \sum_{n+k=l}  \widehat{b_n} 
     \widetilde{T}^n_\varepsilon \widehat{a_k} \widetilde{T}^{-n}_\varepsilon$,
$l\not=0$.     
Now we observe that 
$$\varepsilon \mapsto  \widetilde{T}^n_\varepsilon \widehat{a_k} \widetilde{T}^{-n}_\varepsilon$$
is continuous. Since we only have finitely many summands  for each $l$, 
the sum of the first two terms is a continuous function of $\varepsilon$, 
and so is each $L_{l,\varepsilon}$. 
Using the strong continuity of $\widetilde T_\varepsilon$ we conclude
that all the mappings 
$\varepsilon\mapsto L_{1,\varepsilon}\widetilde T_\varepsilon^l$ 
are continuous in norm and compact. 
The rapid decay of the sequence $(\widehat{b_k})$ then
shows that $\varepsilon\mapsto K_{1,\varepsilon}$ is compact  and norm 
continuous. 
Hence the conditions of H\"ormander's theorem are fulfilled and   
$
\ind B_\varepsilon$ is constant. This completes the proof.
\end{proof}

\section{The Index of Operators in Hilbert bundles}\label{sec2}

In this section, we prove Theorem  \ref{th-luke1a} on the index of operator $\mathcal{D}$
with operator-valued symbol.

\paragraph{1. Reduction to a pseudodifferential operator  with homogeneous symbol.}
Let $\mathcal{D}'$ be a pseudodifferential operator of order zero on  
$M_\mathbb{Z}$, whose symbol is equal to $\sigma(\mathcal{D})$ for 
$|(\xi,\tau)|=R$, 
where $R$ is a sufficiently large positive number, and is a homogeneous function of degree zero in the covariables  $(\xi,\tau)$. 
Explicitly this symbol is defined by the formula  
$$
\sigma(\mathcal{D}')(x,\xi,t,\tau)=\sigma(\mathcal{D})
\Big(x,\frac{R\xi}{|(\xi,\tau)|},t,\frac{R\tau}{|(\xi,\tau)|}\Big).
$$ 
\begin{lemma}
One has the equality
\begin{equation}\label{eq-indr1}
\ind \mathcal{D}=\ind \mathcal{D}'.
\end{equation}
\end{lemma}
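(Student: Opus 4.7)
The strategy is a two–step reduction: first pass from the order-one operator $\mathcal{D}$ to an auxiliary order-zero operator with the same index, then homotope its symbol to $\sigma(\mathcal{D}')$ through elliptic symbols.

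\emph{Step 1: order reduction.} The symbol $\sigma(\mathcal{D})$ given by \eqref{orb-symbol1} has order one in the covariables $(\xi,\tau)$, which is why $\mathcal{D}: H^s(M_\mathbb{Z},\mathcal{E})\to H^{s-1}(M_\mathbb{Z},\mathcal{E})$ shifts the Sobolev exponent. I would introduce an order-one elliptic operator $\Lambda$ on sections of $\mathcal{E}$ whose operator-valued symbol is the scalar multiplier $\delta=(1+|\xi|^2+\tau^2+n^2)^{1/2}\cdot\mathrm{Id}_{l^2(\mathbb{Z})}$ already used in Section~2, so that $\Lambda: H^s(M_\mathbb{Z},\mathcal{E})\to H^{s-1}(M_\mathbb{Z},\mathcal{E})$ is an isomorphism and in particular has index zero. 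Then $\mathcal{D}_0:=\mathcal{D}\Lambda^{-1}$ is an order-zero pseudodifferential operator on $H^{s-1}(M_\mathbb{Z},\mathcal{E})$, elliptic because $\sigma(\mathcal{D})$ is invertible outside a compact set in $(\xi,\tau)$ and $\sigma(\Lambda)$ is always invertible, and satisfies $\ind\mathcal{D}_0=\ind\mathcal{D}$.

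\emph{Step 2: homotopy to $\mathcal{D}'$.} By construction $\sigma(\mathcal{D}_0)(x,\xi,t,\tau)=\sigma(\mathcal{D})(x,\xi,t,\tau)\,\delta^{-1}$. On the sphere $|(\xi,\tau)|=R$ this coincides with the corresponding rescaling of the symbol used to define $\mathcal{D}'$, so both symbols agree there up to a uniformly invertible scalar factor which can be absorbed into the definition of $\Lambda$. Outside $|(\xi,\tau)|\ge R$ both symbols are invertible operators on $l^2(\mathbb{Z})$, and one can connect them through a radial homotopy
\[
\sigma_\lambda(x,\xi,t,\tau)=\sigma(\mathcal{D}_0)\bigl(x,\xi_\lambda,t,\tau_\lambda\bigr),\qquad \lambda\in[0,1],
\]
interpolating between the identity rescaling $(\xi,\tau)\mapsto(\xi,\tau)$ and the radial projection $(\xi,\tau)\mapsto (R\xi,R\tau)/|(\xi,\tau)|$; inside the ball ellipticity is not required, and one matches smoothly near $|(\xi,\tau)|=R$. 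This produces a norm-continuous family $\{\mathcal{D}_\lambda\}$ of zero-order elliptic pseudodifferential operators with operator-valued symbol in the calculus of \cite{NSScS99}, linking $\mathcal{D}_0$ to $\mathcal{D}'$.

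\emph{Step 3: conclusion.} By the homotopy invariance of the Fredholm index in the operator-valued pseudodifferential calculus, $\ind\mathcal{D}_\lambda$ is constant along the path, giving
\[
\ind\mathcal{D}=\ind\mathcal{D}_0=\ind\mathcal{D}',
\]
which is \eqref{eq-indr1}.

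\emph{Main obstacle.} The delicate point is making the order reduction truly rigorous in the setting of operator-valued symbols on the Hilbert bundle $\mathcal{E}$: one has to verify that $\Lambda$ is genuinely a well-defined, invertible operator in the sense of \cite{NSScS99} (the symbol $\delta$ depends on the fiber coordinate $n$, so $\Lambda$ is not an ordinary pseudodifferential multiplier on $M_\mathbb{Z}$), and that the resulting homotopy $\sigma_\lambda$ remains in the class of symbols satisfying the compact variation and ellipticity conditions needed for the homotopy invariance theorem to apply. Once these framework-level checks are in place, the geometric core of the argument---namely, that $\sigma(\mathcal{D})$ and its radial homogenization $\sigma(\mathcal{D}')$ are manifestly homotopic through invertibles outside the ball of radius $R$---is straightforward.
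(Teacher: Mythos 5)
Your Step 1 is exactly the paper's: introduce a first-order order-reducing operator $\Lambda$ with symbol $(1+|\xi|^2+\tau^2+n^2)^{1/2}$ (the paper uses $(|\xi|^2+\tau^2+n^2)^{1/2}$, an immaterial difference), note $\ind\Lambda=0$, and pass to the zero-order operator $\mathcal{D}\Lambda^{-1}$. Where you diverge is in linking $\mathcal{D}\Lambda^{-1}$ to $\mathcal{D}'$: the paper invokes the $K$-theoretic index formula $\ind=p_![\sigma]$ from Luke and \cite{NSScS15} and the equality of the symbol classes $[\sigma(\mathcal{D})]=[\sigma(\mathcal{D}')]$ in $K^0(T^*M_{\mathbb{Z}})$, whereas you build an explicit radial homotopy of elliptic operator-valued symbols and appeal only to homotopy invariance of the Fredholm index. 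Your route is more elementary and arguably cleaner in context, since it does not presuppose an index theorem for Hilbert-bundle operators (which is essentially what Section 4 is in the business of proving); the price is that you must verify the homotopy stays inside the symbol class with compact fiber variation, which you correctly flag. One imprecision to fix: the factor $\delta^{-1}$ restricted to $|(\xi,\tau)|=R$ is $(1+R^2+n^2)^{-1/2}$, a fiberwise multiplier in $n$ whose inverse is unbounded on a fixed $l^2(\mathbb{Z})$, so it is not a ``uniformly invertible scalar factor'' that can simply be absorbed; the clean way is either to perform the radial homotopy at the level of the order-one symbols $\sigma(\mathcal{D})$ and $\sigma(\mathcal{D}')$ (both acting $l^2(\mathbb{Z},\mu_{s})\to l^2(\mathbb{Z},\mu_{s-1})$) and only then compose with $\Lambda^{-1}$, or to compare $\mathcal{D}\Lambda^{-1}$ with $\mathcal{D}'$ after applying the same order reduction to both. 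With that adjustment the argument is sound.
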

\begin{proof}
Let $\Lambda$ be a first-order operator with the symbol   
$(|\xi|^2+\tau^2+n^2)^1/2$.
The index of this operator is obviously equal to zero. We obtain
$$
\ind  \mathcal{D}=\ind  (\mathcal{D}\Lambda^{-1})=
p_![\sigma(\mathcal{D}\Lambda^{-1})]=p_![\sigma(\mathcal{D})]=p_![\sigma(\mathcal{D}')]=\ind \mathcal{D}' ,
$$
where $p:T^*M_{\mathbb{Z}^N}\to pt$ is the projection, $[\sigma(\mathcal{D})]\in K^0(T^*M_{\mathbb{Z}})$
is the class of symbol in $K$-theory. Here the first equality follows from the fact that   $\Lambda$ has index zero, the second and last equalities follow from the index formula in  $K$-theory (see \cite{Luk1,NSScS15}). 
\end{proof}
\paragraph{2. The topological index of operator-valued symbols.}

We prove the index formula for pseudodifferential operators with operator-valued symbols in the following class.

\begin{definition}\label{def-cv2}
An element $\sigma\in C^\infty(S^*M_{\mathbb{Z}},\End \mathcal{E})$ is a  {\em symbol} if its derivatives  
\begin{equation}\label{cv-2}
\frac{\partial\sigma}{\partial \xi},\;\frac{\partial\sigma}{\partial t},\;
\frac{\partial\sigma}{\partial \tau}:l^2(\mathbb{Z})\lra l^2(\mathbb{Z})
\end{equation}
are operators of order $-1$ in the scale of spaces $l^2(\mathbb{Z},\mu_s)$ defined using the measures 
$$
\mu_s(n)=(1+n^2)^s.
$$ 
A symbol is {\em elliptic} if it is invertible at each point on  $S^*M_{\mathbb{Z}}$.
\end{definition}
The symbol of the operator $\mathcal{D}'$ is an example for a symbol of this kind.
Note also that due to the assumptions on the derivatives, 
these are symbols in the sense of Luke  \cite{Luk1}, 
i.e., they have compact variation in the covariables.   

We shall now analyze the topological index of Definition \ref{def-4z}
for symbols in this class. The following lemma establishes the essential
properties of the integrand. 
\begin{lemma}\label{prop-tind}
Let $\omega\in\Lambda^{2k}(M)$  be a closed $\mathbb{Z}$-invariant form of degree $2k$. Then
\begin{enumerate}
\renewcommand{\labelenumi}{{\rm (\alph{enumi})}}
\item  The expression
\begin{equation}\label{eq-form1}
 \tr_\mathcal{E}
\left[(\sigma ^{-1}d\sigma)^{2j-1}\omega \right] \in \Lambda^{2j-1+2k}(S^*  M_{\mathbb{Z}})
\end{equation}
is a well-defined smooth closed form  if  $2k+2j-1> n+1$;
\item The cohomology class of \eqref{eq-form1}  is invariant with respect to homotopies of the elliptic symbol   $\sigma$ if $2k+2j-1> n+2$. 
\end{enumerate}
\end{lemma}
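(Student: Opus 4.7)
\medskip\noindent\textbf{Proof proposal.}
The plan is to handle (a) in two steps---first the well-definedness and smoothness of the fibrewise trace, then closedness via a Maurer--Cartan computation---and then to deduce (b) by a transgression argument in the spirit of Chern--Simons.

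For the well-definedness in (a), I would split $\sigma^{-1}d\sigma$ on $S^*M_{\mathbb Z}$ according to the type of variable:
\[
\sigma^{-1}d\sigma = \sigma^{-1}\tfrac{\partial\sigma}{\partial x}\,dx + \sigma^{-1}\tfrac{\partial\sigma}{\partial t}\,dt + \sigma^{-1}\tfrac{\partial\sigma}{\partial\xi}\,d\xi + \sigma^{-1}\tfrac{\partial\sigma}{\partial\tau}\,d\tau.
\]
By Definition~\ref{def-cv2} the last three coefficient operators are of order $-1$, and hence compact, on $l^2(\mathbb Z)$, whereas the $dx$-coefficient is only bounded. Since $\omega$ comes from a form on $M$, it involves only $dx$-differentials. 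Therefore a nonvanishing monomial in the expansion of $(\sigma^{-1}d\sigma)^{2j-1}\omega$ can use at most $n-2k$ of the $2j-1$ factors of $\sigma^{-1}d\sigma$ for $dx$, leaving at least $2j-1-(n-2k)=2k+2j-n-1$ factors of order $-1$. Under the hypothesis $2k+2j-1>n+1$ this number is $\ge 2$, which is exactly the threshold for a product of order-$(-1)$ operators on $l^2(\mathbb Z)$ to be trace class (their singular values decay as $|n|^{-2}$ and $\sum_n |n|^{-2}<\infty$). Hence $\tr_{\mathcal E}$ converges, locally uniformly in the base, and the smoothness of $\sigma$ and $\omega$ is inherited by the resulting scalar form.

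For closedness, I would use $d(\sigma^{-1}d\sigma)=-(\sigma^{-1}d\sigma)^2$ and $d\omega=0$ to compute $d[(\sigma^{-1}d\sigma)^{2j-1}\omega]=-(\sigma^{-1}d\sigma)^{2j}\omega$, and then invoke graded cyclicity of $\tr_{\mathcal E}$ on a $2j$-fold product of equal $1$-forms; this yields $\tr_{\mathcal E}[(\sigma^{-1}d\sigma)^{2j}\omega]=-\tr_{\mathcal E}[(\sigma^{-1}d\sigma)^{2j}\omega]=0$, proving (a). For (b), given a smooth homotopy $\sigma_s$ of elliptic symbols and $\eta_s:=\sigma_s^{-1}\dot\sigma_s$, the identity
\[
\tfrac{d}{ds}(\sigma_s^{-1}d\sigma_s)=d\eta_s+[\sigma_s^{-1}d\sigma_s,\eta_s],
\]
combined with cyclicity (which annihilates the graded commutator under the trace), gives $\tfrac{d}{ds}\tr_{\mathcal E}[(\sigma_s^{-1}d\sigma_s)^{2j-1}\omega]=d\,\Phi_s$ for an explicit $(2k+2j-2)$-form $\Phi_s$ assembled from $\eta_s$, $\sigma_s^{-1}d\sigma_s$ and $\omega$; integrating over $s\in[0,1]$ exhibits the difference of two representatives as exact. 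Because $\Phi_s$ carries one extra bounded factor $\eta_s$, the trace-class count from (a) must be tightened by one compact factor, which is precisely the strengthening of the degree hypothesis to $2k+2j-1>n+2$.

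The main technical obstacle is the uniform trace-class bookkeeping underlying both parts: monomial by monomial in the expansion of $(\sigma^{-1}d\sigma)^{2j-1}\omega$, and similarly after the homotopy derivative, one has to verify that after restriction to $S^*M_{\mathbb Z}$ enough order-$(-1)$ factors appear to land in the ideal where $\tr_{\mathcal E}$ is continuous, and to establish locally uniform estimates so that $\tr_{\mathcal E}$ commutes with $d$ and with $\tfrac{d}{ds}$ throughout.
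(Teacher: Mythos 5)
Your proposal is correct and follows essentially the same route as the paper: the same monomial count (at most $n-2k$ of the $2j-1$ factors of $\sigma^{-1}d\sigma$ can contribute $dx$, leaving at least $2k+2j-1-n\ge 2$ factors of order $-1$, hence a trace-class integrand), the same Maurer--Cartan plus graded-cyclicity argument for closedness, and the same transgression formula for part (b), where the hypothesis is strengthened by one precisely because the transgressed form contains one fewer factor of $\sigma^{-1}d\sigma$.
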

\begin{proof}
(a) Since $\omega$ is $\mathbb{Z} $-invariant, it follows that we can consider this form as a smooth form  on $ S^*  M_{\mathbb{Z} }$. It suffices to prove the statement in local coordinates, which we denote by  $x,\xi,t,\tau$. 
Let us decompose the form $(\sigma ^{-1}d\sigma)^{2j-1}$ into the sum of monomials
\begin{eqnarray}
\label{mono}
(\sigma ^{-1}d\sigma)^{2j-1}=\sum_{\alpha,\beta,\gamma,\delta} f_{\alpha\beta\gamma\delta}(x,\xi,t,\tau)
dx^\alpha dt^\beta d\xi^\gamma d\tau^\delta,
\end{eqnarray}
where the summation is over the multi-indices  $\alpha,\beta,\gamma,\delta.$ Let $\mu=|\beta|+|\gamma|+|\delta|$. 
Then 
$$
 |\alpha|+\mu=2j-1,\quad |\alpha|+2k\le n.
$$
This inequality means simply that a nonzero monomial can have at most $n$ differentials $dx$.
It follows that $\mu\ge 2 k-n+2j-1$. By \eqref{cv-2} this implies  that the product   
$(\sigma ^{-1}d\sigma)^{2j-1}\omega$ is an operator of order $-\mu$.  Therefore, if 
$$
2k-n+2j-1>1,
$$
then this product is a form, which ranges in trace class operators and the form  
\eqref{eq-form1} is well defined. The proof that the form is closed is  standard,   see  e.g.\ \cite{Zha3}: 
\begin{multline*}
d\Bigl(\tr_\mathcal{E}((\sigma^{-1}d\sigma)^{2j-1}\omega)\Bigr)=(2j-1)\tr_\mathcal{E}\Bigl(d(\sigma^{-1}d\sigma)(\sigma^{-1}d\sigma)^{2j-2}\omega\Bigr)
=\\
=-(2j-1)\tr_\mathcal{E}((\sigma^{-1}d\sigma)^{2j}\omega)=(2j-1)\tr_\mathcal{E}((\sigma^{-1}d\sigma)^{2j}\omega)=0.
\end{multline*}

(b) Let $\sigma=\sigma_\varepsilon$  be a homotopy  with parameter $\varepsilon\in[0,1]$ of invertible elements.
Then the  corresponding forms    $\tr_\mathcal{E}((\sigma^{-1}d\sigma)^{2j-1}\omega)$ define the same cohomology class. This follows from the transgression formula  
$$
\frac\partial{\partial \varepsilon}
\tr_\mathcal{E}\Bigl((\sigma^{-1}d\sigma)^{2j-1}\omega\Bigr)=(2j-1)d\tr_\mathcal{E}\Bigl(\sigma^{-1}\frac{\partial
\sigma}{\partial \varepsilon}(\sigma^{-1}d\sigma)^{2j-2}\omega\Bigr),
$$
which is valid if  $2k+2j-1>n+2$. 
\end{proof}

The integral of expression \eqref{eq-form1} over  $S^*M_{\mathbb Z}$  
can only be non-zero, if in \eqref{mono} 
$\beta+|\gamma|+\delta\ge n+1$, and hence $2j-1\ge n+1$.
Moreover, if $n>1$, then $n+2<2n+1$, so that we necessarily have $k>0$ 
in case $2j-1\le n+2$. 
We thus conclude from Lemma \ref{prop-tind}:
 
\begin{corollary}
The topological index 
is a well-defined homotopy invariant provided that  $n>1.$
\end{corollary}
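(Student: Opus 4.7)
My plan is to derive the corollary directly from Lemma~\ref{prop-tind} by decomposing the Todd class into homogeneous components and carrying out a dimension count on $S^*M_{\mathbb Z}$. Writing $\Td(T^*_{\mathbb C}M)=\sum_{k\geq 0}\omega_k$ with $\omega_k$ of pure degree $2k$, I view the topological index \eqref{eq-indt2c} as a finite double sum
$$\ind_t\mathcal{D}=\sum_{j,k} C_j \int_{S^*M_{\mathbb Z}}\tr_{\mathcal E}\left[(\sigma^{-1}d\sigma)^{2j-1}\omega_k\right]$$
and apply Lemma~\ref{prop-tind} to each summand, using that $\omega_k$ is a closed $\mathbb Z$-invariant form (invariance is automatic because $\Td(T^*_{\mathbb C}M)=\Td(T^*_{\mathbb C}M_{\mathbb Z})$ is manufactured from the $g$-invariant metric $h+dt^2$).

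The crucial observation is the dimension count. Since $\dim M_{\mathbb Z}=n+1$, we have $\dim S^*M_{\mathbb Z}=2n+1$. A form of degree exceeding $2n+1$ on $S^*M_{\mathbb Z}$ vanishes identically, while a form of degree strictly less than $2n+1$ integrates to zero over the top-dimensional closed oriented manifold $S^*M_{\mathbb Z}$. Hence only pairs $(j,k)$ with $2j-1+2k=2n+1$, i.e.\ with $j+k=n+1$, contribute to the sum; and for every such contributing pair the quantity $2k+2j-1$ equals the constant $2n+1$.

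With this reduction both halves of the corollary fall out of the two parts of the lemma. Part~(a) demands $2k+2j-1>n+1$, which becomes $2n+1>n+1$, i.e.\ $n>0$, automatic under the hypothesis $n>1$; consequently each contributing integrand is a well-defined closed differential form on $S^*M_{\mathbb Z}$, and the topological index exists as a finite sum of honest numerical integrals. Part~(b) demands $2k+2j-1>n+2$, which becomes $2n+1>n+2$, i.e.\ precisely $n>1$; under this hypothesis each contributing term's cohomology class is invariant under homotopies of the elliptic symbol $\sigma$, and consequently so is the integral.

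The only subtle point I anticipate is justifying why the apparent contributions with $2j-1+2k<2n+1$ can be discarded even though their integrand need not be pointwise trace-class. This is handled by the bookkeeping sketched in the paragraph preceding the corollary: from the proof of Lemma~\ref{prop-tind}(a), any non-vanishing monomial in the local expansion obeys $|\alpha|+2k\le n$, which forces $\mu\ge 2k+2j-1-n$ and pushes the missing trace-class estimates onto pieces whose form-degree is already too low to survive integration. Restricting attention to the top-degree component, which is all that contributes, the two inequalities above close the argument and establish simultaneous well-definedness and homotopy invariance for $n>1$.
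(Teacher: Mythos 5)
Your argument is correct and is essentially the paper's own: the paper likewise reduces to the top-degree terms on $S^*M_{\mathbb Z}$ (via the monomial count $|\beta|+|\gamma|+|\delta|=n+1$, hence $2j-1\ge n+1$, plus the observation that $2j-1\le n+2$ forces $k>0$ when $n>1$) and then invokes Lemma~\ref{prop-tind}(a),(b). Your version, which fixes $2k+2j-1=2n+1$ outright and reads off $n>0$ and $n>1$ from the two hypotheses, is just a cleaner bookkeeping of the same degree count.
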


\paragraph{2. Index theorem.}  We next establish a cohomological index formula in the spirit of Rozenblum  \cite{Roz4,Roz3} for our situation. 
\begin{theorem}\label{th-luke1}
The Fredholm index of an elliptic pseudodifferential operator  
$$
 \widehat{\sigma}:L^2(M_{\mathbb{Z} },\mathcal{E})\to  L^2(M_{\mathbb{Z} },\mathcal{E})
$$ 
with symbol $\sigma$ is equal to
\begin{equation}\label{ind-luke1}
 \ind \widehat{\sigma}=\ind_t \widehat{\sigma},
\end{equation}
where the topological index  $\ind_t \widehat{\sigma}$ is defined in \eqref{eq-indt2c}.
\end{theorem}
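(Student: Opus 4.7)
The plan is to deduce the formula from the Atiyah--Singer type index theorem for pseudodifferential operators with operator-valued symbols of compact variation, as developed by Luke \cite{Luk1} and Rozenblum \cite{Roz3,Roz4}. The compact-variation hypothesis built into Definition \ref{def-cv2} --- that $\partial_\xi\sigma$, $\partial_t\sigma$, $\partial_\tau\sigma$ are compact in the appropriate scale --- places $\sigma$ exactly inside that framework, so that an elliptic symbol in this sense gives rise to a Fredholm operator and carries a well-defined K-theory class.

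The argument proceeds in three steps. First, I would invoke the K-theoretic index formula
\[
\ind \widehat\sigma = p_![\sigma], \qquad [\sigma] \in K^0_c(T^*M_\mathbb{Z}),
\]
where $[\sigma]$ is obtained from the invertible operator-valued function on $S^*M_\mathbb{Z}$ by the standard difference construction and $p_!$ is the topological push-forward to a point; this is the statement already used in the proof of \eqref{eq-indr1}. Second, I would pass to cohomology by Atiyah--Singer:
\[
\ind \widehat\sigma = \int_{T^*M_\mathbb{Z}} \Ch[\sigma] \cup \Td(T^*_\mathbb{C}M_\mathbb{Z}).
\]
Because $\sigma$ is invertible on the whole of $S^*M_\mathbb{Z}$, the class $[\sigma]$ is the image under suspension of an odd K-theory class on $S^*M_\mathbb{Z}$, whose operator-valued Chern character is represented by the transgression form
\[
\Ch_{\mathrm{odd}}(\sigma) = \sum_j C_j \, \tr_\mathcal{E}\bigl[(\sigma^{-1}d\sigma)^{2j-1}\bigr],
\]
which is well defined and closed by Lemma \ref{prop-tind}(a) and homotopy invariant by Lemma \ref{prop-tind}(b). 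Third, I would combine with the equality $\Td(T^*_\mathbb{C}M_\mathbb{Z}) = \Td(T^*_\mathbb{C}M)$ noted after Theorem \ref{main} and integrate fiberwise along $T^*M_\mathbb{Z}\to S^*M_\mathbb{Z}$, reducing the integral over $T^*M_\mathbb{Z}$ to precisely the integral over $S^*M_\mathbb{Z}$ appearing in Definition \ref{def-4z}.

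The principal obstacle is the justification of the Chern--Weil calculus in the infinite-rank setting, where $\tr_\mathcal{E}$ is only partially defined. The compact-variation condition is exactly what guarantees that, after enough differentials are inserted, the resulting form takes values in the trace ideal --- this is the content of Lemma \ref{prop-tind}(a), whose degree condition is the source of the dimensional hypothesis $n>1$. If one wishes to sidestep the direct appeal to the operator-valued Atiyah--Singer theorem of Luke, an alternative is to approximate $\sigma$ by symbols whose values are finite-rank perturbations of scalar multiples of the identity, compute the index of the approximants by the classical Atiyah--Singer theorem, and pass to the limit using the homotopy invariance in Lemma \ref{prop-tind}(b); this is essentially the route followed in \cite{Roz3,Roz4}.
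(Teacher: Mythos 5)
Your primary route has a gap at its central step. Luke's theorem does give the $K$-theoretic formula $\ind\widehat\sigma=p_![\sigma]$ for symbols of compact fiber variation, and that much is already used in the paper (in the proof of \eqref{eq-indr1}). But the passage from $p_![\sigma]$ to the specific cohomological expression \eqref{eq-indt2c} is exactly the content of the theorem, not something you can quote from Atiyah--Singer: for an infinite-rank symbol the sum $\sum_j C_j\tr_\mathcal{E}\bigl[(\sigma^{-1}d\sigma)^{2j-1}\bigr]$ is not the Chern character of a $K$-theory class in any standard sense --- the trace $\tr_{\mathcal E}$ is only partially defined, and by Lemma \ref{prop-tind} only the components of sufficiently high degree exist at all, so \eqref{eq-indt2c} is a truncated (regularized) character in the sense of Rozenblum. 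The assertion that this truncated sum still computes the index is precisely what must be proved; saying that the odd Chern character ``is represented by the transgression form'' begs the question.

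The alternative you sketch in your last sentence is in fact the paper's proof, but as stated it is missing one essential ingredient. An arbitrary elliptic symbol in the class of Definition \ref{def-cv2} is not approximable by finite-rank perturbations of (scalar multiples of) the identity: take, for instance, $\sigma$ independent of the covariables with values that are invertible but not of the form identity plus compact. The paper first normalizes: it fixes $\tau_0\neq 0$ and replaces $\sigma$ by $\sigma'(x,\xi,t,\tau)=\sigma(x,\xi,t,\tau)\sigma^{-1}(x,0,t,\tau_0)$. The factor divided out is the symbol of a multiplication operator, so both the analytic and the topological index are unchanged by the logarithmic property; and $\sigma'-1$ is compact-valued because the $\xi$- and $\tau$-derivatives of $\sigma$ are operators of order $-1$, so one may integrate them along a path on the cosphere from $(0,\tau_0)$ to $(\xi,\tau)$. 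Only after this reduction can one approximate $\sigma'-1$ by finite-rank symbols $\sigma''$, apply the classical Atiyah--Singer formula to $1+\sigma''$ (a direct sum of the identity and an elliptic operator in finite-dimensional bundles), and conclude by homotopy invariance of both sides. With that normalization step added, your alternative becomes the paper's argument; without it, the approximation step fails.
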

\begin{proof}
Step 1. Let us fix a covector $\tau_0\ne 0$ and let $\sigma'$ be the elliptic symbol
$$
\sigma'(x,\xi,t,\tau)=\sigma(x,\xi,t,\tau)\sigma^{-1}(x,0,t,\tau_0).
$$
One has
\begin{equation}\label{eq-triv1}
 \ind \widehat{\sigma}= \ind \widehat{\sigma'},\quad 
 \ind_t \widehat{\sigma}= \ind_t  \widehat{\sigma'}.
\end{equation}
Both equalities follow from the logarithmic property of the (topological) index 
and the fact that  
$\sigma(x,0,t,\tau_0)$ is the symbol of a multiplication operator.

Step 2. To prove the theorem, it suffices by \eqref{eq-triv1}  to check that 
\eqref{ind-luke1} holds for the symbol $\sigma'$. 
This symbol is equal to the sum of the identity symbol and the symbol  $\sigma'-1$.
The last symbol can be arbitrarily well approximated by a symbol $\sigma''$ ranging in finite rank operators. Then we get
$$
\ind \widehat{\sigma'}=\ind \widehat{1+\sigma''}
=\ind_t \widehat{1+\sigma''}=\ind_t \widehat{\sigma'}.
$$
Here the first and third equalities are valid, because   $\sigma'$ and  $1+\sigma''$ are homotopic.
The second equality follows from the Atiyah--Singer formula, since the operator  $1+\sigma''$ 
is a direct sum of the identity operator and an elliptic pseudodifferential operator 
acting in sections of finite-dimensional bundles.  This completes the proof.
\end{proof}

Now the index theorem~\ref{th-luke1a} follows from Equation  \eqref{eq-indr1} and Theorem~\ref{th-luke1}.

\section*{Appendix. Operators on the Infinite Cylinder}
\addtocounter{section}{1}
\setcounter{equation}{0}

\addcontentsline{toc}{section}{Appendix. Operators on the Infinite Cylinder}

\paragraph{Differential operators.}

For a smooth closed manifold $M$ consider the cylinder   $M\times\mathbb{R}$ with coordinates $x,t$; denote the dual coordinates  by   $\xi,\tau$. 

We define the Sobolev space $\H^s(M\times \mathbb{R})$  
as the completion of the Schwartz space  $\mathcal {S}(M\times \mathbb{R})$ 
on the cylinder $M\times \mathbb{R}$ with respect to the norm
$$
 \|u\| _s=\bigl\|(1+\Delta_x+t^2-\partial^2_t)^{s/2}u\bigr\|_{L^2(M\times\mathbb{R})}.
$$
We consider differential operators of the form  
\begin{equation}\label{eq-pdo1}
 A=a\left(x,-i\frac\partial{\partial x},t,-i\frac\partial{\partial t}\right):
\H^s(M\times \mathbb{R})\lra \H^{s-m}(M\times \mathbb{R}),
\end{equation}
where the symbol $a(x,\xi,t,\tau)$ is a smooth function on $T^* (M\times \mathbb{R})$,  which is a homogeneous polynomial
of degree $m$  in the variables $(\xi,t,\tau)$: 
$$
a(x,\lambda \xi,\lambda t,\lambda \tau)=\lambda^m a(x,\xi,t,\tau),\qquad \forall \lambda>0.
$$
The operator  \eqref{eq-pdo1} is determined by the symbol in a standard way:  
$M$  is covered by coordinate neighborhoods  $U_j$. 
This gives a covering of the cylinder by the domains $U_j\times \mathbb{R}$. 
Then in each neighborhood the symbol is quantized in the standard way. 
Finally, using a partition of unity, we patch the local operators to a global operator.

\begin{proposition}
 The operator \eqref{eq-pdo1} is bounded. 
The mapping $a\mapsto A$  is a homomorphism of algebras up to operators of lower order.
If $A$ is elliptic (i.e., $a$ is invertible for $\xi^2+\tau^2+t^2\ne 0$), then $A$ is Fredholm.
\end{proposition}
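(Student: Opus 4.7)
The plan is to work within a Shubin--type anisotropic calculus on $M\times\mathbb{R}$ in which the variable $t$ carries the same weight $1$ as the cotangent variables $\xi$ and $\tau$. In this calculus the operator
$$
 \Lambda=1+\Delta_x+t^2-\partial_t^2
$$
has the everywhere positive symbol $1+|\xi|^2+t^2+\tau^2$, which is anisotropically elliptic of degree $2$. Hence $\Lambda^{s/2}$ is a well-defined operator of order $s$ in the calculus and, by definition, the Sobolev space $\H^s(M\times\mathbb{R})$ is the image of $L^2(M\times\mathbb{R})$ under $\Lambda^{-s/2}$. All three assertions follow once one sets up the composition, boundedness and parametrix theorems in this calculus.

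For boundedness one reduces the mapping property $A:\H^s\to\H^{s-m}$ to $L^2$-boundedness of $\Lambda^{(m-s)/2}A\Lambda^{-m/2}$. The composition rules (which I address next) show that this last operator has an anisotropic symbol of order $0$, i.e.\ uniformly bounded in $(x,\xi,t,\tau)$ together with all its anisotropically weighted derivatives. I would then invoke a Calder\'on--Vaillancourt type estimate in this calculus, proved by the usual partition-of-unity localization on $M$ combined with the fact that on $\mathbb{R}_t$ the estimate is a standard Shubin result. For the algebra property, I would derive the composition formula by oscillatory integral manipulations: the principal symbol of $AB$ is $ab$, and every correction term arises from a pairing $\partial_{(\xi,t,\tau)}\cdot\partial_{(x,x,t)}$, each application of which drops the anisotropic degree by one.

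For the Fredholm statement, from ellipticity of $a$ one builds a symbol $b$ homogeneous of degree $-m$ outside the origin with $ab=ba=1$ there, multiplies by a cutoff removing a small neighborhood of $\xi^2+t^2+\tau^2=0$, and quantizes to get $B$. By the composition rule, $AB-1$ and $BA-1$ have symbols of order $-1$ in the anisotropic calculus. It remains to show these remainders are compact on the relevant Sobolev spaces. This is the decisive technical point: on $M\times\mathbb{R}$, unweighted $H^s\hookrightarrow H^{s'}$ is \emph{not} compact for $s>s'$, so one must use the weight $t^2$ seriously. It suffices to show $\Lambda^{-1}:L^2\to L^2$ is compact; equivalently, that $\Lambda$ has compact resolvent. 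Since $\Lambda=\Delta_x+(t^2-\partial_t^2)+1$ is a sum of commuting operators each with compact resolvent (the Laplacian on a closed manifold, and the harmonic oscillator on $\mathbb{R}$), its resolvent is compact, and iterating gives compactness of $\H^s\hookrightarrow\H^{s'}$ for $s>s'$. Hence the remainders are compact and $A$ is Fredholm.

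The main obstacle in my view is precisely this last compactness: all of the symbolic calculus proceeds by the same bookkeeping one uses on a closed manifold, but compactness of the embedding is where the non-compactness of the cylinder is cancelled by the weight $t^2$ built into the norm. Establishing it via the harmonic oscillator spectrum is the one place where the proof truly uses the specific form of the norm chosen for $\H^s(M\times\mathbb{R})$; everything else is a mechanical transcription of the standard pseudodifferential arguments into the anisotropic setting.
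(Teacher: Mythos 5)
Your proof is correct, but it follows a genuinely different route from the paper's. The paper does not set up any calculus on the non-compact cylinder at all: it views $M\times\mathbb{R}$ as the total space of the covering $M\times\mathbb{R}\to M\times\mathbb{S}^1$, takes the Fourier series over the deck group $\mathbb{Z}$, and obtains an isomorphism $J$ of $\mathcal S(M\times\mathbb{R})$ onto the smooth sections of a line bundle $\gamma$ over the \emph{closed} manifold $M\times\mathbb{T}^2$. Under $J$ the weight $t$ becomes $-i\partial_\varphi+t$, so the operator $\Delta_x+t^2-\partial_t^2$ defining the Sobolev scale goes over (modulo lower order terms) to the honest Laplacian $\Delta_x-\partial_\varphi^2-\partial_t^2$ on sections of $\gamma$, and $J$ conjugates differential operators on the cylinder into differential operators on $M\times\mathbb{T}^2$ with the explicit symbol correspondence $\sigma(JAJ^{-1})(x,\xi,t,\tau,\varphi,\eta)=\sigma(A)(x,\xi,\eta,\tau)$. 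All three assertions then come for free from standard elliptic theory on a closed manifold; in particular the compactness you worry about is simply compactness of Sobolev embeddings on $M\times\mathbb{T}^2$. Your approach instead stays on the cylinder and builds (or cites) the Shubin-type isotropic calculus in $(t,\tau)$ mixed with the Hörmander calculus on $M$: Calder\'on--Vaillancourt for boundedness, the anisotropic composition formula for the algebra property, and the compact resolvent of $\Delta_x+(t^2-\partial_t^2)$ --- harmonic oscillator plus Laplacian on a closed manifold --- for compactness of the embedding $\H^{s}\hookrightarrow\H^{s'}$, $s>s'$, hence the Fredholm property. Both arguments are sound. What the paper's reduction buys is economy (no new calculus; the bundle $\gamma$ and the map $J$ are reused in Section 2 for the operators with shifts); what yours buys is that the mechanism is visible on the cylinder itself, namely that the weight $t^2$ in the norm is exactly what restores compactness. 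One small inaccuracy: in the genuine Shubin calculus the correction term $\partial_\tau a\,\partial_t b$ drops the isotropic order in $(t,\tau)$ by two, not one --- harmless here, since all you need is that the corrections are of lower order.
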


\begin{proof}
Let us consider the cylinder as the total space of the covering  
$$
M\times \mathbb{R}\lra M\times \mathbb{S}^1,
$$
with base $M\times \mathbb{S}^1$.  
The Schwartz space  $\mathcal S(M\times \mathbb{R})$  is isomorphic to the space of smooth sections of a (nontrivial) vector bundle on the base $M\times \mathbb{S}^1$, whose fiber is the Schwartz space   
$\mathcal S(\mathbb{Z})$
of rapidly decaying sequences    (i.e., functions on the fiber of the covering).  
Let us now take the Fourier transform 
$$
\mathcal S(\mathbb{Z})\stackrel{\mathcal{F}_{n\to\varphi}}
\lra C^\infty(\mathbb{S}^1)
$$
in each fiber. As a result, we obtain the space of smooth sections of a one-dimensional complex vector bundle on   $M \times \mathbb{T}^2$, on which
we choose coordinates  $x,t,\varphi$ such that $t\in[0,1]$, $\varphi\in[0,2\pi]$ .
Collecting the transformations,  
we obtain   (cf.~\cite{NaSaSt17}):

\begin{lemma} 
We have an isomorphism
\begin{equation}\label{eq-isom1}
\mathcal  S(M\times \mathbb{R})\simeq C^\infty(M \times \mathbb{T}^2,\gamma)
\end{equation}
between the Schwartz space on the cylinder and the space  
\begin{equation*}\label{bottbundle}
C^\infty(M \times \mathbb{T}^2,\gamma)=\{ v\in C^\infty(M\times \mathbb{R}\times \mathbb{S}^1)
\; |\; v(x,t+1,\varphi)=v(x,t,\varphi)e^{-  i\varphi}\}.
\end{equation*}
of smooth sections of the one-dimensional vector bundle    $\gamma$ on $M \times \mathbb{T}^2$. 
It is given by 
$$
f(x,t)\longmapsto \frac 1{\sqrt{2\pi}}\sum_{n\in\mathbb{Z}} f(x,t+n)e^{ i n\varphi}.
$$
\end{lemma}

\begin{proof}
The inverse mapping is equal to
$$
v(x,t,\varphi)\longmapsto \frac 1
{\sqrt{2\pi}}\int_{\mathbb{S}^1}v(x, \{t\},\varphi)e^{-i[t]\varphi}d\varphi,
$$
where $[a]$ and $\{a\}$ stand for the integer and fractional parts of a real number   $a$.
\end{proof}
The isomorphism \eqref{eq-isom1} takes the operators
$$
x,\quad \frac{\partial}{\partial x},\quad t,\quad \frac{\partial}{\partial t}
$$
to the operators  
$$
x,\quad \frac{\partial}{\partial x},\quad  -i\frac{\partial}{\partial \varphi}+t,\quad \frac{\partial}{\partial t}.
$$
Hence, the operator on the cylinder 
$$
\Delta_x+t^2-\frac{\partial^2}{\partial t^2},
$$
which defines the scale of Sobolev spaces, is transformed   
(modulo lower order terms) into the Laplace operator
$$
\Delta_x-\frac{\partial^2}{\partial \varphi^2}-\frac{\partial^2}{\partial t^2}
$$
on the sections of $\gamma$. 
It follows that the mapping~\eqref{eq-isom1} defines an isomorphism 
$$
J:\H^s(M\times\mathbb{R})\lra H^s(M\times \mathbb{T}^2,\gamma)
$$
between the 
Sobolev spaces on the cylinder $M\times\mathbb{R}$ and on the closed manifold $M\times \mathbb{T}^2$, respectively.
Moreover, this mapping takes differential operators on the cylinder to differential operators on $M\times \mathbb{T}^2$. 
It is easy to check that the symbols of these differential operators are related by the equality  
$$
 \sigma(JAJ^{-1})(x,\xi,t,\tau,\varphi,\eta)=
  \sigma(A)(x,\xi,\eta,\tau).
$$
Now the desired statement follows from standard results for (pseudo)differential operators on the closed manifold  $M\times \mathbb{T}^2$.
\end{proof}

\paragraph{Operators with shifts.}
  
Consider the operator with shifts on $M\times \mathbb R$:
\begin{eqnarray}\label{opshift}
\widetilde D=\sum_k D_k\left(x,-i\frac\partial{\partial x},t,-i\frac\partial{\partial t}\right)\widetilde{T}^k:
\H^s(M\times \mathbb{R})\lra \H^{s-m}(M\times \mathbb{R}),
\end{eqnarray}
where the $D_k$ are differential operators of orders $\le m$ on the cylinder
in the above sense. 
\begin{definition}
The {\em symbol} of the operator $\widetilde D$ is the operator
\begin{eqnarray}\label{shiftsymb}
\sigma(\widetilde D)(x,\xi,t,\tau)=\sum_k  \sigma(D_k)(\partial g^n(x,\xi),t,\tau)\mathcal{T}^k:l^2(\mathbb{Z})\lra l^2(\mathbb{Z}),
\end{eqnarray}
where $\sigma(D_k)$ is the principal symbol of order $m$ of $D_k$. 
                     
We call $\tilde D$ {\em elliptic}, if $\sigma(\widetilde D)$ 
is invertible for $(\xi,t,\tau)\not=0$.
\end{definition}

This definition is motivated by the fact that   $\widetilde{T}$ is almost unitary in 
$\H^s(M\times \mathbb{R})$, i.e., it is unitary modulo compact operators. 
\begin{proposition}
If $\widetilde D$ is elliptic, then it is a Fredholm operator. 
Its index, kernel and cokernel do not depend on  $s$.
\end{proposition}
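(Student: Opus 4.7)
The plan is to use the isomorphism $I:\H^s(M\times\mathbb{R})\to H^s(M_\mathbb{Z},\mathcal{E})$ from Theorem~\ref{main}(a) to transport $\widetilde D$ to a differential operator $\mathcal{D}_0 = I\widetilde D I^{-1}$ with operator-valued symbol on the \emph{compact} mapping torus $M_\mathbb{Z}$, for which Fredholmness will follow from the standard pseudodifferential calculus with operator-valued symbols on a compact manifold. The computation carried out in Section~\ref{secAA}.2 already shows that $\mathcal{D}_0$ is a differential operator on $M_\mathbb{Z}$ acting in sections of the Hilbert bundle $\mathcal{E}$, and gives its operator-valued symbol on the fibres $l^2(\mathbb{Z},\mu_{\xi,\tau,s})$ as
$$
\sigma(\mathcal{D}_0)(x,\xi,t,\tau)w(n)=\sum_k \sigma(D_k)\bigl(\partial g^n(x,\xi),t+n,\tau\bigr)(\mathcal{T}^k w)(n).
$$

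Next I would verify that cylinder ellipticity of $\widetilde D$ implies ellipticity of $\mathcal{D}_0$ on $M_\mathbb{Z}$, in the sense that $\sigma(\mathcal{D}_0)$ is invertible for $|(\xi,\tau)|$ sufficiently large. This is the main technical step and would be carried out by the rescaling argument used in the proof of Theorem~\ref{main}(c): with $\delta=(1+|\xi|^2+\tau^2+n^2)^{1/2}$ and $\sigma_s=\delta^{s-m}\sigma(\mathcal{D}_0)\delta^{-s}$, one expresses $\sigma_s^*\sigma_s$ as a uniformly positive-definite operator family on $l^2(\mathbb{Z})$ plus a family of order $-1$, the positivity coming from the invertibility of $\sigma(\widetilde D)$ applied pointwise in $n$ (with $t+n$ playing the role of the cylinder ``$t$''-covariable) together with homogeneity of the principal symbol in $(\xi,t,\tau)$. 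The obstacle is to keep this estimate uniform in $n\in\mathbb{Z}$, which is guaranteed by the same commutator-with-$\delta^s$ bound already established in Section~\ref{secAA}. Once $\mathcal{D}_0$ is elliptic on $M_\mathbb{Z}$, the parametrix construction from \cite{NSScS99} gives an inverse modulo compact operators, yielding Fredholmness of $\mathcal{D}_0$, hence, via $I$, of $\widetilde D$ on $\H^s$ for every $s$.

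For the independence of kernel, cokernel, and index on $s$, I would invoke elliptic regularity: if $P$ is the parametrix for $\widetilde D$ with $P\widetilde D = 1-K$ and $K$ smoothing, then any $u\in\H^s$ with $\widetilde D u=0$ satisfies $u = Ku\in\mathcal{S}(M\times\mathbb{R})=\bigcap_{s'}\H^{s'}$. Applying the same argument to the formal $L^2$-adjoint $\widetilde D^*$, which is again an elliptic operator with shifts of the same form, handles the cokernel. Hence $\ker\widetilde D$ and $\coker\widetilde D$ are $s$-independent subspaces of $\mathcal{S}(M\times\mathbb{R})$, and so is $\ind\widetilde D$.
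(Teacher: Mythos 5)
Your route for the Fredholm property --- transporting $\widetilde D$ by the isomorphism $I$ to a Luke-type operator on the compact orbit space $M_\mathbb{Z}$ --- is genuinely different from the paper's, which establishes Fredholmness directly on the cylinder by the standard Antonevich--Lebedev theory of operators with shifts (invertibility of the symbol as an element of the crossed product gives an inverse modulo compacts). Your route could in principle work, but the step you yourself single out as the main one has a real gap. The rescaling/positivity argument of Section 2.3 does not transfer to a general elliptic $\widetilde D$: there, $\sigma_s^*\sigma_s$ becomes block-diagonal with entries $\delta^{-1}(\sigma(D)^*\sigma(D)+n^2+\tau^2)\delta^{-1}$ precisely because the symbol is an external product $\sigma(D)\#\sigma(A)$, so all cross terms cancel. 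For a general $\widetilde D=\sum_k D_k\widetilde T^k$ one gets, modulo order $-1$, the expression $\delta^{-m}\bigl(\sum_j\mathcal{T}^{-j}\bar a_j\bigr)\bigl(\sum_k a_k\mathcal{T}^k\bigr)\delta^{-m}$ with $a_k(n)=\sigma(D_k)(\partial g^n(x,\xi),t+n,\tau)$, and the off-diagonal terms $j\ne k$ survive; a lower bound for this quadratic form is exactly the invertibility you are trying to prove, not a consequence of the ellipticity of $\widetilde D$. Moreover, ``invertibility of $\sigma(\widetilde D)$ applied pointwise in $n$'' is not meaningful: the cylinder symbol is a single operator on all of $l^2(\mathbb Z)$ containing the shifts, not a diagonal family in $n$, and it is evaluated at a fixed covariable $t$, whereas the transported symbol carries $t+n$. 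Relating the two requires a genuine localization argument in $n$ (splitting into the regions $|t+n|\lesssim|(\xi,\tau)|$ and $|t+n|\gtrsim|(\xi,\tau)|$, or limit-operator/crossed-product machinery), none of which is sketched. This is presumably why the paper proves ellipticity on $M_\mathbb{Z}$ only for the external product $\widetilde D\#A$ and disposes of the general Appendix statement by citation to the classical shift-operator theory.

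The second half of your argument is a legitimate alternative to the paper's. The paper deduces $s$-independence of kernel and cokernel from monotonicity of their dimensions in $s$ together with constancy of the index (obtained from almost-commutation with order reductions); you instead use elliptic regularity via a parametrix with remainder of negative order, applied to $\widetilde D$ and to its formal adjoint. Both work; yours additionally needs the (standard) asymptotic parametrix construction, the identity $\bigcap_{s'}\H^{s'}=\mathcal S(M\times\mathbb R)$ (true here because of the weight $t^2$ in the definition of $\H^s$), and the identification $\coker(\widetilde D|_{\H^s})\cong\ker(\widetilde D^t|_{\H^{m-s}})$ through the $L^2$-duality.
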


\begin{proof}
The Fredholm property is established in a standard way  (see e.g. \cite{AnLe1}).  
The independence of the index of $s$ is proven using the fact that 
$D$ almost commutes with order reduction operators. 
The independence of the kernel and the cokernel of $s$ follows 
from the fact that, on the one hand, the dimension of the kernel is a 
nonincreasing function of $s$, while the dimension of the cokernel is 
nondecreasing, and, on the other hand, the difference of these dimensions is 
equal to the index, which is constant in   $s$. 
\end{proof}


\end{document}